\renewcommand{\Im}{\,\mathrm{Im}}
\newtheorem{theorem}{Theorem}
\newtheorem{lemma}[theorem]{Lemma}
\newtheorem{proposition}[theorem]{Proposition}
\newtheorem{remark}[theorem]{Remark}
\newcounter{spslist}
\newenvironment{spslist}{
  \begin{list}
  {\arabic{spslist}.}
  {\usecounter{spslist}
  \setlength{\leftmargin}{2.8em}
  \setlength{\labelsep}{0.6em}
  \setlength{\labelwidth}{1em}
  \setlength{\topsep}{1ex}
  \setlength{\rightmargin}{0em}
  \setlength{\itemsep}{0.5ex}
  \setlength{\parsep}{0em}
  \setlength{\itemindent}{0em} }}
  {\end{list}}
\newcommand{\mat}[5]{ \renewcommand{\arraystretch}{#1}
                    \left[\! \begin{array}{cc}
                            #2 & #3 \\
                            #4 & #5 \end{array} \!\right] }
\newcounter{geqncount}
    {\refstepcounter{equation}%
     \setcounter{geqncount}{\value{equation}}%
     \setcounter{equation}{0}%
  }%
    {\setcounter{equation}{\value{geqncount}}}
\newcommand{\tensor}{\!\otimes\!}
\newcommand{\CC}{\mathbb{C}}
\newcommand{\NN}{\mathbb{N}}
\newcommand{\ZZ}{\mathbb{Z}}
\newcommand{\RR}{\mathbb{R}}
\newcommand{\E}{\mathcal{E}}
\newcommand{\Ehalf}{\mathcal{E}^{\hspace{-1pt}\half}\!}
\newcommand{\EhalfC}{\mathcal{E}^\half_\CC\!}
\newcommand{\onehalf}{{\textstyle{\frac{1}{2}}}}
\newcommand{\half}{{  \raisebox{1pt}{  \hspace{-3pt}\tiny $\textstyle{\frac{1}{2}}$\hspace{-1pt}  }   }}
\newcommand{\mhalf}{{  \raisebox{1pt}{  \hspace{-3pt}\tiny --$\textstyle{\frac{1}{2}}$\hspace{-1pt}  }   }}
\newcommand{\st}{\hspace{4pt}{\text{\large$\mid$}}\hspace{4pt}}
\newcommand{\one}{\mathrm{1}}
\newcommand{\Span}{\mathrm{span}}
\def\cases#1{\left\{\begin{matrix} #1 \end{matrix}\right.}
\def\nonpmatrix#1{\begin{matrix} #1 \end{matrix}}
\def\Imag{\mathop{\rm Im}}
\begin{document}

\bibliographystyle{plain}

\begin{center}
{\bf \Large  The inverse problem for a spectral asymmetry function\\\vspace{3pt} of the Schr\"odinger operator on a finite interval}
\end{center}

\vspace{0ex}

\begin{center}
{\scshape \large B. Malcolm Brown\footnote{School of Computer Science and Informatics, Cardiff University, BrownBM@cardiff.ac.uk} \,and\, Karl Michael Schmidt\footnote{School of Mathematics, Cardiff University, SchmidtKM@cardiff.ac.uk}\\ \,and\, Stephen P\hspace{-2.7pt}. Shipman\footnote{Department of Mathematics, Louisiana State University @ Baton Rouge, shipman@lsu.edu, {\sc orcid} 0000-0001-6620-6528} \,and\, Ian Wood\footnote{School of Mathematics, Statistics and Actuarial Science, University of Kent, i.wood@kent.ac.uk}} \\
\vspace{3ex}
{\itshape 
${}^{1,2}$\hspace{-1pt}Cardiff University \,and\, ${}^3$\hspace{-1pt}Louisiana State University \,and\, ${}^4$\hspace{-1pt}University of Kent}
\end{center}

\vspace{3ex}
\centerline{\parbox{0.9\textwidth}{
{\bf Abstract.}\
For the Schr\"odinger equation $-d^2 u/dx^2 + q(x)u = \lambda u$ on a finite $x$-interval, there is defined an ``asymmetry function" $a(\lambda;q)$, which is entire of order $1/2$ and type $1$ in~$\lambda$.  Our main result identifies the classes of square-integrable potentials $q(x)$ that possess a common asymmetry function.  For any given $a(\lambda)$, there is one potential for each Dirichlet spectral sequence.
}}

\vspace{3ex}
\noindent
\begin{mbox}
{\bf Key words:}  spectral theory; Schr\"odinger operator; inverse spectral problem; entire function; asymmetry function
\end{mbox}

\vspace{3ex}

\noindent
\begin{mbox}
{\bf MSC:}  34A55, 34L05, 30E05
\end{mbox}
\vspace{3ex}


\hrule

\section{Introduction} 

Consider the spectral Schr\"odinger equation $-d^2 u/dx^2+q(x)u = \lambda u$\, on the $x$-interval $[0,1]$ with $q$ real valued and square-integrable.  Let $c(x,\lambda;q)$ and $s(x,\lambda;q)$ be a pair of fundamental solutions satisfying
\begin{equation*}
\begin{split}
  c(0,\lambda;q) = 1, &\quad s(0,\lambda;q) = 0, \\
  c'(0,\lambda;q) = 0, &\quad s'(0,\lambda;q) = 1,
\end{split}
\end{equation*}
in which the prime denotes differentiation with respect to~$x$.  In this article, we are interested in the following entire spectral function associated with~$q$:
\begin{equation*}
  a(\lambda;q) \;:=\; \textstyle\frac{1}{2} \big( c(1,\lambda;q) - s'(1,\lambda;q) \big).
\end{equation*}
This is called the spectral asymmetry function for the potential $q(x)$, or simply its asymmetry function.
When the potential is understood, we may suppress writing the dependence of these functions on~$q$.

The connection of the asymmetry function to asymmetry of the potential is seen as follows.  Define $\tilde q(x) = q(1-x)$; then, by considering the transfer function taking Cauchy data at $0$ to Cauchy data at $1$, we see that $c(1,\lambda;\tilde q)=s'(1,\lambda;q)$, so that
\begin{equation*}
  a(\lambda;q) \;:=\; \textstyle\frac{1}{2} \big( c(1,\lambda;q) - c(1,\lambda;\tilde q)  \big).
\end{equation*}
This shows that the asymmetry function vanishes identically as a function of $\lambda$ whenever $q$ is symmetric about the midpoint of $[0,1]$.  In fact, $a(\lambda;q)$ vanishes identically if and only if $q$ is symmetric.  An abbreviated proof of this appears in~\cite[Lemma~4]{Yurko1975}; a more detailed proof is given in \cite[Theorem\,2]{Shipman2019}, along with several other properties of the asymmetry function.  This motivates the idea of the  {\em asymmetry class of potentials} associated to a given asymmetry function $a(\lambda)$---it is the set of all potentials $q$ that possess $a(\lambda)$ as its asymmetry function, that is, all $q$ such that $a(\lambda;q)=a(\lambda)$. 

Asymmetry classes play a key role in the spectral theory of bilayer graph operators in~\cite{Shipman2019}.  
Their fundamental property is that the Dirichlet-to-Neumann maps for two different potentials commute if and only if the potentials have the same asymmetry function.  The DtN map is given by the matrix
\begin{equation*}
  N(\lambda;q) \;=\; \frac{1}{s(1,\lambda;q)} \mat{1.2}{-c(1,\lambda;q)}{1}{1}{-s'(1,\lambda;q)},
\end{equation*}
which maps Dirichlet data of a solution of $-d^2 u/dx^2+q(x)u = \lambda u$ on $[0,1]$ to the Neumann data of the solution: $N(\lambda)[u(0),u(1)]^t = [u'(0),-u'(1)]^t$.  It is a meromorphic function with poles at the roots of $s(\lambda;q)$, which are the Dirichlet eigenvalues of the Schr\"odinger operator $-d^2/dx^2+q(x)$ on $[0,1]$. 

The asymmetry function and the function $b(\lambda)=\textstyle\frac{1}{2} \big( c(1,\lambda;q) + c(1,\lambda;\tilde q) \big)$ coincide with the functions $u_-(\lambda)$ and $u_+(\lambda)$ in~\cite[p.\,494;\,Lemma\,4.1]{MarcenkoOstrovskii1975}, where the authors characterize the spectra of Hill's operator ($-d^2/dx^2+q(x)$ with periodic potential) as certain admissible sequences of intervals on the line.
The functions $a$ and $b$ are also identical to $\delta$ and $\Delta$, respectively, in~\cite[p.\,2]{Yurko2016}.

The purpose of the present work is to characterize the set of all asymmetry functions and, given a fixed asymmetry function $a$, to identify its asymmetry class of potentials, which, as defined above, is the set of all potentials $q$ that possess that $a$ as its asymmetry function.  The main result is Theorem~\ref{thm:main} in section~\ref{sec:bijection}, which states that, for each asymmetry function $a$ in a certain Hilbert space of entire functions, the asymmetry class of $a$ contains one potential for each admissible sequence of Dirichlet eigenvalues.

The analysis in this paper draws primarily upon deep work in spectral theory of the Schr\"odinger operator on the interval by E. Trubowitz, H. P. McKean, and J. P\"oschel.

\smallskip In what follows, to avoid cumbersome notation, we will often suppress the dependence of functions on the potential $q$. 
The following function spaces will be used in this article.
\begin{align*}
  L^2[0,1] &=  \Big\{ q:[0,1]\to\RR \st \int_0^1\! q(x)^2\,dx <\infty \Big\} \\
  \ell^2(\ZZ) &= \Big\{ \alpha:\ZZ\to\RR \st \sum_{n=-\infty}^\infty \alpha_n^2 < \infty \Big\}\\
  \mathcal{S} &= \Big\{ \alpha\in\ell^2 \st (\pi^2n^2 + \alpha_n)_{n\in\NN} \text{ is strictly increasing} \Big\} \\
  \ell^2_1(\NN) &= \Big\{ \alpha:\NN\to\RR \st \sum_{n=1}^\infty n^2\alpha_n^2 < \infty \Big\}\\
  \E &= \Big\{ \phi : \CC\to\CC \st \phi\;\text{ entire, order $\leq 1$, type $\leq 1$, } \phi(\RR)\subset\RR,\, \int_\RR |\phi(\lambda)|^2d\lambda <\infty \Big\}\\
  \Ehalf &= \Big\{ \phi : \CC\to\CC \st \phi\;\text{ entire, order $\leq 1/2$, type $\leq 1$, } \phi(\RR)\subset\RR,\, \int_0^\infty |\phi(\lambda)|^2\lambda^{\hspace{-1pt}\half} d\lambda <\infty \Big\}
\end{align*}

\section{Basic properties of the asymmetry function} 

The property of the asymmetry function most relevant to this paper is the fact that it is in the class $\Ehalf$ and that its evaluation on a Dirichlet spectral sequence is in $\ell^2_1$.

\begin{proposition}\label{prop:a}
Let $q\in L^2$ and a sequence $(\mu_n)_{n\in\NN}$, $\mu_n = n^2\pi^2 + c_n$, with $(c_n)_{n\in\NN}$ a real-valued bounded sequence, be given.  The asymmetry function $a(\lambda;q)$ lies in the class~$\Ehalf$, and the sequence $(a(\mu_n;q))_{n\in\NN}$ lies in the class $\ell^2_1$.
\end{proposition}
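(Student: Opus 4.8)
The plan is to build the asymmetry function from the standard asymptotics of the fundamental solutions $c(1,\lambda;q)$ and $s'(1,\lambda;q)$, which are classical (due to, e.g., Marchenko and to Pöschel--Trubowitz). Writing $\lambda = z^2$ with $z = \sqrt{\lambda}$ in the principal branch, one has expansions of the form
\begin{equation*}
  c(1,\lambda;q) = \cos z - \frac{1}{2z}\Big(\int_0^1 q\Big)\sin z + \frac{1}{z}\,\widetilde c(z), \qquad
  s'(1,\lambda;q) = \cos z + \frac{1}{2z}\Big(\int_0^1 q\Big)\sin z + \frac{1}{z}\,\widetilde s(z),
\end{equation*}
where the remainder terms $\widetilde c, \widetilde s$ are of the form $\int_0^1 K(t)\cos(zt)\,dt$ (respectively with $\sin$) for $K\in L^2$, hence are $O(e^{|\Im z|})$ and in fact lie in a Paley--Wiener-type class; the crucial point is that along the real $\lambda$-axis these remainders are square-integrable after the appropriate weighting. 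Subtracting, the leading $\cos z$ terms cancel, giving
\begin{equation*}
  a(\lambda;q) = -\frac{1}{2z}\Big(\int_0^1 q\Big)\sin z + \frac{1}{2z}\big(\widetilde c(z) - \widetilde s(z)\big).
\end{equation*}
From this representation the membership $a \in \Ehalf$ should follow: $a$ is manifestly entire in $\lambda$ (the apparent pole at $z=0$ is removable since $a(\lambda;q)$ is entire in $\lambda$ by construction from entire solutions), it is real on the real axis, and the bound $|\sin z|, |\widetilde c(z)|, |\widetilde s(z)| = O(e^{|\Im z|})$ together with the $1/z$ prefactor gives order $\le 1/2$ and type $\le 1$ in $\lambda$. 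For the $L^2$ condition with weight $\lambda^{1/2}$, on $\lambda = \mu > 0$ we have $z = \sqrt\mu$ and $|a(\mu;q)|^2 \mu^{1/2} \lesssim \frac{1}{\mu}\big(\sin^2\!\sqrt\mu + |\widetilde c|^2 + |\widetilde s|^2\big)\mu^{1/2} = \mu^{-1/2}(\cdots)$; after the substitution $\mu = z^2$, $d\mu = 2z\,dz$, this integral becomes $\int_0^\infty |\widetilde c(z) - \widetilde s(z) - (\int q)\sin z|^2\,dz$ up to constants, which is finite because each piece is in $L^2(0,\infty)$ (the $\sin z$ term by the Paley--Wiener representation of $\widetilde c - \widetilde s$ absorbing it, or by treating the whole bracket as one $L^2$-Fourier-type object). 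This last manipulation is the step to be careful with, since $\sin z$ alone is not in $L^2(\RR)$; the resolution is that the remainder structure must be organized so that the full bracket is a genuine Fourier (cosine/sine) transform of an $L^2$ kernel on $[0,1]$, and I would lean on the precise form of the transformation-operator expansions in Pöschel--Trubowitz to see this.

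For the second claim, that $(a(\mu_n;q))_{n\in\NN} \in \ell^2_1$, I would evaluate the representation above at $\mu_n = n^2\pi^2 + c_n$, so $z_n = \sqrt{\mu_n} = n\pi + \frac{c_n}{2n\pi} + O(n^{-3})$ since $(c_n)$ is bounded. Then $\sin z_n = \sin\!\big(n\pi + \tfrac{c_n}{2n\pi} + O(n^{-3})\big) = (-1)^n\big(\tfrac{c_n}{2n\pi} + O(n^{-3})\big) = O(1/n)$, and the prefactor $1/z_n = O(1/n)$, so the first term contributes $O(1/n^2)$, which is certainly in $\ell^2_1$ (indeed even $\sum n^2 \cdot n^{-4} < \infty$). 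For the remainder term, $\frac{1}{2z_n}(\widetilde c(z_n) - \widetilde s(z_n))$: the sequence $(\widetilde c(z_n) - \widetilde s(z_n))_n$ is obtained by sampling a Fourier transform of an $L^2[0,1]$ function at points $z_n = n\pi + O(1/n)$, and by standard results on Riesz bases of exponentials / sampling (again available from the Pöschel--Trubowitz machinery, or from the fact that $\{e^{i z_n t}\}$ is a Riesz basis for $L^2[-1,1]$ when $z_n = n\pi + O(1/n)$ with a Kadec-type perturbation) this sampled sequence is in $\ell^2(\NN)$. Multiplying by the $1/(2z_n) = O(1/n)$ factor then gives a sequence $\beta_n$ with $\sum n^2 |\beta_n|^2 \lesssim \sum n^2 \cdot n^{-2} |\gamma_n|^2 = \sum |\gamma_n|^2 < \infty$ where $(\gamma_n) \in \ell^2$. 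Hence $(a(\mu_n;q)) \in \ell^2_1$.

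The main obstacle is the $\int_\RR |\phi(\lambda)|^2 \lambda^{1/2}\,d\lambda < \infty$ condition in the definition of $\Ehalf$, or rather its analogue after the change of variables: naively the $\sin z$ leading term of $2z\,a$ is bounded but not decaying, so one must exploit the cancellation/structure in $\widetilde c - \widetilde s$ (i.e. the specific dependence on $q$ through the transformation operators) to see that $2z\,a(z^2;q)$, as a function of $z$, is actually the sine transform of an $L^2[0,1]$ kernel — only then is the weighted integral finite. Equivalently, one should identify $2\sqrt\lambda\,a(\lambda;q)$ directly with a Paley--Wiener function of exponential type $1$ that is $L^2$ on the real line. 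Once that identification is in hand, both the $\Ehalf$-membership and the $\ell^2_1$-sampling statement follow from the standard Paley--Wiener and Riesz-basis-of-exponentials theory. I would cite the relevant asymptotic and transformation-operator results from Pöschel--Trubowitz (and Marchenko) rather than re-deriving them, since the excerpt explicitly builds on that work.
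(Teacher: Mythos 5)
Your outline follows the same strategy as the paper (subtract the large-$\lambda$ expansions of $c(1,\lambda)$ and $s'(1,\lambda)$, identify the difference with a Paley--Wiener function of $\sqrt\lambda$, then sample at $\mu_n$), but it stops exactly at the step that carries the content of the proposition, and the expansions as you have written them make that step fail. In the actual asymptotics (P\"oschel--Trubowitz, \S1 Theorem 3), the term $\frac{\sin z}{2z}\int_0^1 q$ appears with the \emph{same} sign in both $c(1,\lambda)$ and $s'(1,\lambda)$ --- equivalently, $s'(1,\lambda;q)=c(1,\lambda;\tilde q)$ with $\tilde q(x)=q(1-x)$, and $\int_0^1 q=\int_0^1\tilde q$ --- so this non-square-integrable piece cancels \emph{exactly} in the difference. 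What survives is the pair of oscillatory integrals, which carry opposite signs and combine to
\begin{equation*}
a(\lambda)\;=\;\frac{1}{2\sqrt\lambda}\int_0^1\sin\bigl(\sqrt\lambda\,(1-2x)\bigr)\,q_{\mathrm o}(x)\,dx\;+\;\tilde R(\lambda),
\qquad q_{\mathrm o}(x)=\tfrac12\bigl(q(x)-q(1-x)\bigr),
\end{equation*}
with $\tilde R(z)=O(e^{|\Imag\sqrt z|}/|z|)$. You instead assign opposite signs to the $\frac{\sin z}{2z}\int q$ terms, so that a free-standing $\frac{\sin z}{2z}\int q$ survives in $a$; as you yourself observe, $\sin z$ is not in $L^2(\RR)$ and the weighted integral $\int_0^\infty|a|^2\lambda^{1/2}d\lambda$ would then diverge. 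Your proposed fix --- that ``the remainder structure must be organized so that the full bracket is a genuine Fourier transform of an $L^2$ kernel'' --- is precisely the assertion that needs proof, and deferring it to unspecified transformation-operator identities leaves the argument incomplete. The resolution is the symmetry observation above: only the odd part $q_{\mathrm o}$ survives the subtraction, $f(z)=-\frac14\int_{-1}^1\sin(zy)\,q_{\mathrm o}\bigl(\frac{y+1}{2}\bigr)dy$ is an entire function of exponential type $1$ that is square-integrable on lines parallel to $\RR$ by Paley--Wiener, and the substitution $\lambda=z^2$ converts $\int_0^\infty|f(\sqrt\lambda)/\sqrt\lambda|^2\lambda^{1/2}d\lambda$ into $\int_{\RR}|f(z)|^2dz<\infty$.

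Your treatment of the second claim is essentially sound and close in spirit to the paper's: once the representation above is in hand, evaluating at $\mu_n=n^2\pi^2+c_n$ turns the integral into the $n$-th Fourier sine coefficient of $q_{\mathrm o}\bigl(\frac{y+1}{2}\bigr)$ up to an $O(n^{-2})$ error, and the prefactor $\frac{1}{4n\pi}$ supplies the extra factor of $n^{-1}$ needed for $\ell^2_1$. The paper gets this by direct Taylor expansion of $\sqrt{\mu_n}=n\pi+O(n^{-1})$ and Bessel's inequality, with no need for the Kadec/Riesz-basis machinery you invoke; your heavier route would also work, but only after the first part is repaired, since it samples the same (currently unestablished) $L^2$ Fourier representation.
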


\begin{proof}
Since $q$ is real-valued, both $c(x,\lambda;q)$ and $s(x,\lambda;q)$ are real-valued for real $\lambda$, and thus $a$ is real-valued on the real line.
The function $c(1,\lambda;q)$ is an entire function of $\lambda$ of order $1/2$ satisfying
\begin{equation}\label{cpert}
  c(1,\lambda) \;=\; \cos\sqrt{\lambda} + \lambda^\mhalf c_1(\lambda) + R(\lambda),
\end{equation}
where
\begin{equation}
  c_1(\lambda) = \onehalf\sin\sqrt{\lambda} \int_0^1 q(x)\,dx
      + \onehalf \int_0^1 \sin\big(\sqrt{\lambda}(1-2x)\big) q(x) \,dx
\end{equation}
and $R$ is an entire function such that $z\,e^{-|\Imag\sqrt z|}\,R(z)$ $(z\in\mathbb{C})$ is bounded (cf.\ \cite[(1.1.15)]{FreilingYurko2001}; \cite[pp.\,14--15]{PoschelTrubowitz1987}).
The function $c(\tilde{q};1,\lambda)$ can be written identically to (\ref{cpert}) except with $q(x)$ replaced with $q(1-x)$ in the expression of $c_1(\lambda)$.  Thus we obtain
\begin{equation}\label{afourier}
\begin{split}
  a(\lambda) &= \frac{1}{2\sqrt{\lambda}} \int_0^1 \sin\big( \sqrt{\lambda}(1-2x) \big) q_\mathrm{o}(x) \,dx
      + \tilde R(\lambda)\\
      &= \frac{-1}{4\sqrt{\lambda}} \int_{-1}^1 \sin(\sqrt{\lambda}\,y)\, q_\mathrm{o}\big(\textstyle\frac{y+1}{2}\big) \,dy
      + \tilde R(\lambda),
\end{split}
\end{equation}
in which $q_\mathrm{o}(x) = (q(x) - q(1-x))/2$ is the odd part of $q$, and $\tilde R$ is an entire function such that $z\,e^{-|\Imag\sqrt z|}\,\tilde R(z)$ $(z\in\mathbb{C})$ is bounded.
If we abbreviate
$$ f(z) = - \frac 1 4 \int_{-1}^1 \sin(z y)\, q_\mathrm{o}\textstyle\big(\frac{y+1}2\big)\,d y \qquad (z \in \mathbb{C}), $$
then by the Paley-Wiener theorem, $f$ is an entire function of exponential type 1 that is
square-integrable over parallels to the real axis. Hence, setting $\tilde a(\lambda) = f(\sqrt\lambda)/\sqrt\lambda$, we have
$$ \int_0^\infty |\tilde a(\lambda)|^2 \sqrt\lambda\,d\lambda = \int_{-\infty}^\infty |\tilde a(z^2)\,z|^2\,d z < \infty. $$
Thus $\tilde a \in \Ehalf$. 
Moreover, $\tilde R \in \Ehalf$, and as $\Ehalf$ is a vector space, it follows that $a = \tilde a + \tilde R \in \Ehalf$.

For the given sequence $\mu_n$, $\sqrt{\mu_n} = n\pi + O(n^{-1})$ and $\sqrt{\mu_n}^{\ -1} = (n\pi)^{-1} + O(n^{-3})$; and evaluation of (\ref{afourier}) at $\lambda=\mu_n$ yields
\begin{equation}
  a(\mu_n) = \frac{-1}{4n\pi} \int_{-1}^1 \sin(n\pi y) q_\mathrm{o}\big(\textstyle\frac{y+1}{2}\big) dy + O(n^{-2}).
\label{eq:aFou}
\end{equation}
The integrals in (\ref{eq:aFou}) form the sequence of (sine) Fourier coefficients of a square-integrable function, which
therefore is a square-summable sequence; this places the sequence $(a(\mu_n))_{n\in\mathbb{N}}$ in~$\ell^2_1$.
\end{proof}

\noindent
The asymmetry function has the following additional properties, which are proved in \cite[Theorem\,2]{Shipman2019}.

\begin{spslist}
  \item  The potential $q$ is symmetric if and only if $a(\lambda;q)\equiv0$.
  \item  The DtN maps $N(\lambda;q_1)$ and $N(\lambda;q_2)$ commute if and only if $a(\lambda;q_1)=a(\lambda;q_2)$.
  \item  The Dirichlet spectrum of $-d^2/dx^2+q(x)$, together with $a(\lambda;q)$, determine $q\in L^2[0,1]$ uniquely.
  \item
  $
 \displaystyle
  c'(1,\lambda;q)\,a(\lambda;q) \;=\;  -\!\int_0^1 \! q_\mathrm{o}(x)\,c(x,\lambda;q)\,  c(x,\lambda;\tilde q)\,dx\,,
$
\,where $q_\mathrm{o}(x) = \frac{1}{2}\big(q(x)-q(1-x)\big)$.
\end{spslist}

\noindent
Property (3) will emerge as a result of the main Theorem~\ref{thm:main}.

\section{An interpolation theorem}

The question of whether a complex function from a certain class can be uniquely determined from its values on a given discrete set of points is fundamental in interpolation and sampling theory, going all the way back to Shannon's famous sampling theorem \cite{Shannon}
 based on work of Whittaker \cite{Whittaker}. For functions sampled on the whole complex plane, there are many results relating the growth of the function to the required density of the point set, see, e.g.~\cite{Lindholm,Marco}. For functions sampled only at points on the real
line, the classical result on the required density of the discrete set is \cite[Theorem 3.3]{Levin}. In our case, we will be sampling functions in the class $\Ehalf$ on the Dirichlet spectrum of a Sturm-Liouville operator. This set does not satisfy the density assumptions of any of the results mentioned above.  However, the class $\Ehalf$ involves an additional decay assumption on the entire functions being sampled which will enable us to prove the desired result. Theorem~\ref{thm:interpolation} is the key result in this section, as it tells us that the asymmetry function is completely determined by its values at the Dirichlet spectrum of a potential $q\in L^2[0,1]$.
We give a full proof, which adapts arguments of McKean and Trubowitz~\cite[\S5]{McKeanTrubowitz1976a}.

The entire function
\begin{equation}\label{e}
  e(\omega) := s'(1,\omega^2) - i\omega s(1,\omega^2)
\end{equation}
is a de\,Branges function since it satisfies the inequality in the following lemma.
See \cite[Ch\,2\,\S19]{deBranges1968} for the theory of de\,Branges spaces.

\begin{lemma}\label{lemma:debranges}
For all $\omega\in\RR$,
\begin{equation}\label{easymp}
  |e(\omega)|^2=1+o(1)
  \qquad (|\omega|\to\infty),
\end{equation}
and if $e(\omega)=0$, then $\omega=0$ and $s'(1,0)=0$.

If the least Dirichlet eigenvalue $\mu_1$ of $-d^2/dx^2+q(x)$ on $[0,1]$ is positive, then for all $\omega\in\CC$ with $\Im\,\omega>0$,
\begin{equation}
  |e(\omega)| \;>\; |e(\bar\omega)|.
\end{equation}
\end{lemma}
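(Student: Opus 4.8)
The plan is to establish the three parts of the statement in turn.

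\smallskip\noindent\emph{Asymptotics on the real axis.} For real $\omega$ the spectral parameter $\omega^2$ is real and nonnegative, so $s(1,\omega^2)$ and $s'(1,\omega^2)$ are real and $|e(\omega)|^2 = s'(1,\omega^2)^2 + \omega^2 s(1,\omega^2)^2$. I would substitute the standard large-$\lambda$ asymptotics for a potential in $L^2$ --- the analogues of~\eqref{cpert} for $s$ and $s'$ --- which for real $\omega\to\pm\infty$ read $\omega\,s(1,\omega^2) = \sin\omega + O(\omega^{-1})$ and $s'(1,\omega^2) = \cos\omega + O(\omega^{-1})$, whence $|e(\omega)|^2 = \sin^2\omega + \cos^2\omega + O(\omega^{-1}) = 1 + o(1)$.

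\smallskip\noindent\emph{Zeros on the real axis.} If $\omega\in\RR$ and $e(\omega)=0$, then $s'(1,\omega^2) = i\omega\,s(1,\omega^2)$; since the left-hand side and the number $\omega\,s(1,\omega^2)$ are both real, each must vanish. If $\omega\neq0$ this forces $s(1,\omega^2) = 0 = s'(1,\omega^2)$, contradicting the constancy of the Wronskian $c(1,\cdot)s'(1,\cdot) - c'(1,\cdot)s(1,\cdot) \equiv 1$. Hence $\omega = 0$, and then $e(0) = s'(1,0) = 0$.

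\smallskip\noindent\emph{The de\,Branges inequality.} Since $q$ is real, $\overline{s(1,\overline\mu)} = s(1,\mu)$ and likewise for $s'$; taking $\mu=\omega^2$, so $\overline{\omega^2}=\overline\mu$, this gives $e(\overline\omega) = \overline{\,s'(1,\omega^2) + i\omega\,s(1,\omega^2)\,}$, hence $|e(\overline\omega)| = |s'(1,\omega^2) + i\omega\,s(1,\omega^2)|$. Writing $u := s(\cdot,\omega^2)$, $P := u'(1)$, $R := u(1)$, I would expand
\[
  |e(\omega)|^2 - |e(\overline\omega)|^2 \;=\; |P - i\omega R|^2 - |P + i\omega R|^2 \;=\; -4\,\Real\big(P\,\overline{i\omega R}\big),
\]
and then invoke the Lagrange identity: integrating $\overline u\,(-u'' + qu) = \omega^2|u|^2$ over $[0,1]$ and using $u(0)=0$ gives $P\overline R = u'(1)\overline{u(1)} = Q[u] - \omega^2\|u\|^2$, where $Q[u] := \int_0^1(|u'|^2 + q|u|^2)\,dx \in \RR$ and $\|u\|^2 := \int_0^1|u|^2\,dx > 0$. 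Substituting and collecting the right-hand side into a single imaginary part gives
\[
  |e(\omega)|^2 - |e(\overline\omega)|^2 \;=\; 4\,(\Im\omega)\,\big(Q[u] + |\omega|^2\|u\|^2\big),
\]
so for $\Im\omega>0$ the asserted inequality is equivalent to the strict positivity $Q[u] + |\omega|^2\|u\|^2 > 0$. I would derive this from the hypothesis via the variational characterization of the least eigenvalue of $-d^2/dx^2+q$ on $[0,1]$ under a Dirichlet condition at $0$: positivity of the Dirichlet spectrum bounds $Q[u]$ below by a suitable multiple of $\|u\|^2$, and together with $|\omega|^2>0$ (valid since $\omega\neq0$ whenever $\Im\omega>0$) the sum is positive. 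An equivalent route is to show $e$ has no zero with $\Im\omega>0$ at all: the imaginary-part identity applied to a zero $e(\omega)=0$ confines it to the positive imaginary axis, where $e(i\tau) = s'(1,-\tau^2) + \tau\,s(1,-\tau^2)$, and the signs of $s(1,\cdot)$ and $s'(1,\cdot)$ on $(-\infty,0]$ --- controlled by the Dirichlet spectrum --- exclude such a zero.

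\smallskip\noindent\emph{Main obstacle.} The first two parts are routine given the cited asymptotic estimates and the Wronskian relation. The crux is the last step: establishing $Q[u] + |\omega|^2\|u\|^2 > 0$ throughout the open upper half-plane --- i.e.\ that $e$ is genuinely a de\,Branges (Hermite--Biehler) function --- which is where the spectral hypothesis on $\mu_1$ must be brought to bear with care.
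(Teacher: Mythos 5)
Your proposal follows essentially the same route as the paper's proof: the same large-$\omega$ asymptotics for $\omega s(1,\omega^2)$ and $s'(1,\omega^2)$, the same real/imaginary-part-plus-Wronskian argument for the zeros, and the same Lagrange-identity computation, which the paper writes as $|e(\omega)|^2-|e(\bar\omega)|^2=4\Im\big(\omega\, s(1,\omega^2)s'(1,\bar\omega^2)\big)>4\Im\omega\int_0^1\big(|s'(x,\omega^2)|^2+q(x)|s(x,\omega^2)|^2\big)\,dx$ after discarding the strictly positive term $|\omega|^2\|s(\cdot,\omega^2)\|_{L^2}^2$. The positivity you single out as the ``main obstacle'' is exactly the point the paper settles in one line by the Rayleigh-quotient inequality $\int_0^1(|u'|^2+q|u|^2)\,dx\ \ge\ \mu_1\|u\|_{L^2}^2$ for the quadratic form of the Dirichlet operator applied to $u=s(\cdot,\omega^2)$, so that $\mu_1>0$ yields the claim; no further ideas beyond what you already have are used.
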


\begin{proof}
According to the estimates in \cite[\S1.2]{LevitanSargsjan1991a} or \cite[\S1 Theorem\,3]{PoschelTrubowitz1987}, for $\omega\in\RR$,
\begin{equation}
  |e(\omega)|^2\;=\;s'(1,\omega^2)^2+\omega^2s(1,\omega^2)^2\;=\;1+o(1) \qquad (|\omega|\rightarrow\infty).
\end{equation}
Since $s(1,\lambda)$ and $s'(1,\lambda)$ are real when $\lambda$ is real, $e(\omega)=0$ for $\omega\in\RR$ implies that $s'(1,\omega^2)=\omega s(1,\omega^2)=0$.  Since $s$ and $s'$ cannot simultaneously vanish, $\omega=0$, and thus $s'(1,0)=0$.

The proof of the inequality is a modification of the proof of \cite[\S5\,Lemma\,1]{McKeanTrubowitz1976a}.  Since $s(x,\lambda)$ is real for $\lambda\in\RR$, we have $s(x,\bar\lambda)=\overline{ s(x,\lambda)}$.  For $\Im\,\omega>0$,
\begin{align}
  |e(\omega)|^2 - |e(\bar\omega)|^2 &\;=\; 4\Im\left( \omega\, s(1,\omega^2) s'(1,\bar\omega^2) \right)  \\
   &\;=\;  4\Im\,\omega  \int_0^1[\,s(x,\omega^2) s'(x,\bar\omega^2)\,]' dx \\
   &\;>\;  4\Im\,\omega \int_0^1 (|s'(x,\omega^2)|^2+q(x)|s(x,\omega^2)|^2) dx \\
   &\;\geq\;  (4\Im\,\omega) \mu_1\left\| s(\cdot,\omega^2) \right\|^2_{L^2} .
\end{align}
The last inequality comes from the Rayleigh quotient inequality for the quadratic form of the Dirichlet operator $-d^2/dx^2+q(x)$.  
\end{proof}

To the function $e(\omega)$ is associated a de Branges space $B=B_q$, which is a reproducing-kernel Hilbert space.  
It consists of all entire functions $f$ such that
\begin{equation}
  \int_\RR \left| \frac{f(\lambda)}{e(\lambda)} \right|^2 \!d\lambda \;<\; \infty
\end{equation}
and there exists a real number $C_{\!f}$ such that, for all $\omega$ with $\Im\,\omega>0$,
\begin{equation}
  \left| \frac{f(\omega)}{e(\omega)} \right|,\; \left| \frac{\overline{f(\bar\omega)}}{e(\omega)} \right|
  < \frac{C_{\!f}}{\sqrt{\Im\,\omega\,}}.
\end{equation}
The inner product $B[ \cdot,\,\cdot ]=B_q[ \cdot,\,\cdot ]$ in $B$ is that of $L^2(\RR,d\lambda/|e(\lambda)|^2)$; that is, through restriction of functions in $B$ to $\RR$, $B$ is identified with a closed subspace of this weighted $L^2$ space.  The reproducing kernel is
\begin{equation}\label{eq:one}
  \one_\alpha(\beta)
  \;:=\;  \frac{\overline{e(\alpha)}\,e(\beta) - e(\overline \alpha)\,\overline{e(\overline \beta)}}{2 i (\overline \alpha-\beta)}
  \;=\; \frac{\overline \alpha\, s(1, \overline \alpha^2) s'(1, \beta^2) - \beta\, s'(1, \overline \alpha^2) s(1, \beta^2)}{\overline \alpha - \beta}\,.
\end{equation}
The singularity at $\beta=\bar{\alpha}$ is removable; indeed, $\one_\alpha$ is entire.
Therefore, for all $f\in B$ and all $\omega\in\CC$,
\begin{equation}\label{reproducing}
  f(\omega) \;=\; B[ f,\,\one_\omega ]
  \;=\;  \int_\RR f(\beta) \overline{\one_\omega(\beta)}\, \frac{d\beta}{|e(\beta)|^2}\,.
\end{equation}
The function $\one_\alpha(\beta)$ is $K(w,z)$ in \cite[\S19 Theorem\,19]{deBranges1968}.

We assume now, by adding a suitable constant to the potential, that
\begin{align}
 s'(1,0) &\not = 0, \quad s(1,0) \not = 0.
\label{edirneu}\end{align}
Let $(\mu_k)_{k\in\mathbb{N}}$ denote the sequence of Dirichlet eigenvalues associated to the operator  $-d^2/dx^2+q(x)$ on $[0,1]$.
We then have
\begin{align}\label{oneateig}
 \one_\omega(\pm \sqrt{\mu_j}) &= \frac{\overline \omega\, s(1, \overline \omega^2) s'(1, \mu_j)}{\overline \omega \mp \sqrt{\mu_j}},
\end{align}
so if
$\beta = \pm \sqrt{\mu_j}$
and
$\omega \in \{\pm \sqrt{\mu_k} : k \in {\mathbb N}\} \setminus \{\beta\},$
then
$\one_\omega(\beta) = 0;$
and, as
$\overline \omega \rightarrow \pm \sqrt{\mu_j}$\,,
\begin{align}
 \one_\omega(\pm \sqrt{\mu_j}) &\rightarrow \pm \sqrt{\mu_j}\, s'(1, \mu_j) \,\frac \partial{\partial \omega} s(1, \omega^2) \Big|_{\omega = \pm \sqrt{\mu_j}}
\nonumber\\
 &= 2 \mu_j\,s'(1, \mu_j)\,\frac \partial{\partial \lambda}s(1, \mu_j) = 2 \mu_j l_j^2,
\label{enorm}\end{align}
where we have used $s'(1, \mu_j)\,\frac \partial{\partial \lambda}s(1, \mu_j) = l_j^2$~\cite[\S2 Theorem 2]{PoschelTrubowitz1987} with
\begin{align}
 l_j^2 &= \int_0^1 s(x, \mu_j)^2\,d x \qquad (j \in {\mathbb N})
\nonumber\end{align}
being the Dirichlet-Dirichlet norming constants. 
For suitable functions $f$ and $g$, define
\begin{align}
 A[f,g] = A_q[f,g] \;:=\; \frac{f(0)\,\overline{g(0)}}{s'(1,0)\,s(1,0)} + \sum_{j=1}^\infty \frac{1}{2 \mu_j l_j^2}\,\big(f(-\sqrt{\mu_j}) \overline{g(-\sqrt{\mu_j})} + f(\sqrt{\mu_j}) \overline{g(\sqrt{\mu_j})}\big).
\nonumber\end{align}
Define the space $A=A_q$ to consist of all functions $f$ defined on $\{0\}\cup\{\pm\sqrt{\mu_j}\}_{j\in\NN}$ such that
\begin{equation}\label{A}
  A[f,f]<\infty.
\end{equation}

The following two lemmas comprise an analog of \cite[\S5\,Lemma\,2]{McKeanTrubowitz1976a} for 
Dirichlet eigenvalue sequences.

\begin{lemma}\label{lemma:AB}
Let the potential $q\in L^2[0,1]$ satisfy $\mu_1>0$, $s'(1,0)\not=0$, and $s(1,0)\not=0$.
Then for all $\alpha$ and $\beta$ in $\CC$,
\begin{equation}
  A_q[\one_\alpha, \one_\beta] \;=\; B_q[\one_\alpha, \one_\beta].
\end{equation}
\end{lemma}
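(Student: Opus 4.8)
The plan is to evaluate both sides in closed form and see that they agree. The right-hand side is immediate: $\one_\alpha$ is the reproducing kernel of $B_q$ and lies in $B_q$, so \eqref{reproducing} gives $B_q[\one_\alpha,\one_\beta]=\one_\alpha(\beta)$. Everything therefore rests on the identity $A_q[\one_\alpha,\one_\beta]=\one_\alpha(\beta)$. Both members are continuous in $\alpha,\beta$ — on the $A_q$ side the defining series converges locally uniformly, since by \eqref{oneateig} and $2\mu_jl_j^2=1+o(1)$ the $j$-th summand is $O(j^{-2})$ (which also records that $\one_\alpha\in A_q$) — so it suffices to prove the identity when $\overline\alpha,\beta\notin T:=\{0\}\cup\{\pm\sqrt{\mu_j}:j\in\NN\}$ and $\overline\alpha\ne\beta$, the general case following by continuity.

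First I would rewrite $A_q$ compactly. Put $P(\omega):=s'(1,\omega^2)$ and $B(\omega):=\omega\,s(1,\omega^2)$, so that $e(\omega)=P(\omega)-iB(\omega)$ and $T$ is exactly the zero set of $B$; these zeros are simple, because $B'(0)=s(1,0)\neq0$ by \eqref{edirneu} and $B'(\pm\sqrt{\mu_j})=2\mu_j\,\partial_\lambda s(1,\mu_j)\neq0$ (the Dirichlet eigenvalues are simple and $\mu_1>0$). Specializing \eqref{eq:one}, \eqref{oneateig}, \eqref{enorm} and $s'(1,\mu_j)\,\partial_\lambda s(1,\mu_j)=l_j^2$, one obtains the uniform formulas $\one_\alpha(t)=\overline\alpha\,s(1,\overline\alpha^2)\,P(t)/(\overline\alpha-t)$ and $\one_t(t)=P(t)\,B'(t)$ for $t\in T$, together with $\overline{\one_\beta(t)}=\beta\,s(1,\beta^2)\,P(t)/(\beta-t)$. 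Since $A_q[f,g]=\sum_{t\in T}f(t)\overline{g(t)}/\one_t(t)$, this collapses to
\[
  A_q[\one_\alpha,\one_\beta]=\overline\alpha\,\beta\,s(1,\overline\alpha^2)\,s(1,\beta^2)\sum_{t\in T}\frac{P(t)}{B'(t)\,(\overline\alpha-t)(\beta-t)}.
\]

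Next I would evaluate the sum by residues. Its $t$-term is the residue at the simple pole $\omega=t$ of $G(\omega):=P(\omega)\big/\!\big(B(\omega)(\overline\alpha-\omega)(\beta-\omega)\big)$, whose only other poles are the simple ones at $\omega=\overline\alpha$ and $\omega=\beta$, with residues $-P(\overline\alpha)/(B(\overline\alpha)(\beta-\overline\alpha))$ and $-P(\beta)/(B(\beta)(\overline\alpha-\beta))$. Integrating $G$ around the circles $|\omega|=(N+\half)\pi$ and letting $N\to\infty$, the standard asymptotics $s(1,\omega^2)=\omega^{-1}\sin\omega+O(\omega^{-2}e^{|\Im\omega|})$ and $s'(1,\omega^2)=\cos\omega+O(\omega^{-1}e^{|\Im\omega|})$ (cf.\ \cite[\S1]{PoschelTrubowitz1987}) give $|G(\omega)|=O(|\omega|^{-2})$ on those contours — crucially, the radius $(N+\half)\pi$ keeps them a fixed distance from the real zeros $\sqrt{\mu_j}\approx j\pi$ of $B$, where $|B|$ is small while $|P|$ is not — so the contour integrals vanish in the limit and the total residue of $G$ is $0$. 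Solving for $\sum_{t\in T}$, substituting into the display above, and cancelling $B(\overline\alpha)=\overline\alpha\,s(1,\overline\alpha^2)$ and $B(\beta)=\beta\,s(1,\beta^2)$, the right-hand side becomes $\big(\overline\alpha\,s(1,\overline\alpha^2)\,s'(1,\beta^2)-\beta\,s'(1,\overline\alpha^2)\,s(1,\beta^2)\big)/(\overline\alpha-\beta)=\one_\alpha(\beta)$ by \eqref{eq:one}. Combined with $B_q[\one_\alpha,\one_\beta]=\one_\alpha(\beta)$, this proves the lemma.

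The main obstacle is the contour estimate: one must choose the radii so the circles avoid neighbourhoods of the $\pm\sqrt{\mu_j}$, and then, using the exponential-type bounds on the remainders in the $s,s'$ asymptotics, verify that $G$ decays like $|\omega|^{-2}$ uniformly along them; the rest is bookkeeping with \eqref{eq:one}--\eqref{enorm}. A shorter, less computational alternative is to invoke the de Branges sampling theorem \cite[\S22]{deBranges1968}: since $B=\tfrac{i}{2}(e-e^{*})\notin B_q$ (its modulus on $\RR$ does not decay), the kernels $\{\one_t\}_{t\in T}$ form an orthogonal basis of $B_q$ with $\|\one_t\|_{B_q}^2=\one_t(t)$, so $A_q$ is precisely the Parseval form of $B_q$ in this basis and hence agrees with $B_q$ on $\one_\alpha,\one_\beta\in B_q$.
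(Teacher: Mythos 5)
Your proof is correct, and it reaches the identity by a genuinely different route in the one step that carries all the content. The paper first expands $A_q[\one_\alpha,\one_\beta]$ by partial fractions and then invokes the identity $\sum_{j}s'(1,\mu_j)^2/\big((\mu_j-\omega^2)l_j^2\big)=-s'(1,\omega^2)/s(1,\omega^2)$, which it derives by equating the closed-form Dirichlet resolvent kernel with its eigenfunction expansion and differentiating at the corner $(1,1)$. You instead collapse $A_q$ into a single sum over the (simple) zero set $T$ of $B(\omega)=\omega\,s(1,\omega^2)$ and evaluate it in one stroke by the residue theorem applied to $G(\omega)=P(\omega)/\big(B(\omega)(\overline\alpha-\omega)(\beta-\omega)\big)$ on the circles $|\omega|=(N+\half)\pi$. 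The two arguments secretly rest on the same Mittag--Leffler expansion of the quotient $-s'(1,\cdot)/s(1,\cdot)$; you obtain it from the asymptotics of $s$ and $s'$ and a contour estimate, the paper from operator theory. Your version trades the Green's-function computation for the (standard but not free) verification that $|B|\gtrsim e^{|\Imag\omega|}$ on those circles, which works because they stay a fixed distance from $\pm\sqrt{\mu_j}=\pm(j\pi+O(1/j))$; your reductions $\one_\alpha(t)=\overline\alpha\,s(1,\overline\alpha^2)P(t)/(\overline\alpha-t)$ and $\one_t(t)=P(t)B'(t)$, including the $t=0$ term against the weight $1/(s'(1,0)s(1,0))$, all check out against \eqref{eq:one}--\eqref{enorm}, and your continuity argument for removing the excluded parameter values plays the role of the paper's analytic continuation in $\alpha,\beta$. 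The closing alternative via de Branges' sampling theorem is also legitimate and in fact yields the stronger statement that $A_q$ and $B_q$ agree on all of $B_q$, which is essentially what Lemma~\ref{lemma:ABiso} goes on to establish; the only cost is importing Theorem~22 of \cite{deBranges1968} as a black box, including the verification that $B\notin B_q$.
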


\begin{proof}
From (\ref{oneateig}) it follows for $\alpha,\beta\neq\pm\sqrt{\mu_j}$ that
\begin{align}
 \one_\alpha(\pm \sqrt{\mu_j}) \overline{\one_\beta(\pm \sqrt{\mu_j})} &= \frac{\overline \alpha s(1,\overline \alpha^2)s'(1,\mu_j)}{\overline \alpha \mp \sqrt{\mu_j}}\,\frac{\beta s(1,\beta^2) s'(1,\mu_j)}{\beta \mp \sqrt{\mu_j}}
\nonumber\\
 &= \frac{\overline \alpha \beta\,s(1,\overline \alpha^2)\,s(1,\beta^2)\,s'(1,\mu_j)^2}{(\pm \sqrt{\mu_j} - \overline \alpha)(\pm \sqrt{\mu_j} - \beta)},
\nonumber\end{align}
and further
\begin{align}
 &\one_\alpha(\sqrt{\mu_j}) \overline{\one_\beta(\sqrt{\mu_j})} + \one_\alpha(-\sqrt{\mu_j}) \overline{\one_\beta(-\sqrt{\mu_j})}
\nonumber\\
 &= \overline \alpha \beta s(1, \overline \alpha^2) s(1, \beta^2) s'(1, \mu_j)^2 \left(\frac{1}{(\sqrt{\mu_j}-\overline \alpha)(\sqrt{\mu_j} - \beta)} + \frac{1}{(-\sqrt{\mu_j}-\overline \alpha)(-\sqrt{\mu_j} - \beta)} \right)
\nonumber\\
 &= 2 \overline \alpha \beta\,s(1, \overline \alpha^2)\,s(1, \beta^2)\,s'(1, \mu_j)^2\,\frac{\mu_j + \overline \alpha \beta}{(\mu_j - \overline \alpha^2)(\mu_j - \beta^2)}.
\nonumber\end{align}
Also, from \eqref{eq:one}
we find
\begin{align}
 \one_\alpha(0)\,\overline{\one_\beta(0)} &= s(1,\overline{\alpha}^2)\,s(1, \beta^2)\,s'(1, 0)^2.
\nonumber\end{align}
This yields, by partial fraction decomposition,
\begin{align}
 A[\one_\alpha,\one_\beta] \;&=\; \frac{s'(1, 0)}{s(1,0)}\,s(1,\overline{\alpha}^2) s(1,\beta^2) + \sum_{j=1}^\infty \frac{\overline \alpha \beta}{l_j^2}\,s(1, \overline \alpha^2)\,s(1, \beta^2)\,s'(1,\mu_j)^2\,\frac{\mu_j + \overline \alpha \beta}{\mu_j\,(\mu_j - \overline \alpha^2)\,(\mu_j - \beta^2)}
\nonumber\\
 &=\; \frac{s'(1, 0)}{s(1,0)}\,s(1,\overline{\alpha}^2) s(1,\beta^2)\;+
\nonumber\\
 &\quad +\; \overline \alpha \beta\,s(1, \overline \alpha^2) \,s(1, \beta^2) \sum_{j=1}^\infty \frac{s'(1,\mu_j)^2}{l_j^2} \left(\frac{1}{\overline \alpha \beta \mu_j} + \frac{1}{\overline \alpha(\overline \alpha - \beta)(\mu_j - \overline \alpha^2)} + \frac{1}{\beta(\beta - \overline \alpha)(\mu_j - \beta^2)}\right).
\nonumber\end{align}

Now we consider the resolvent kernel for the Dirichlet boundary value problem,
\begin{align}
 R(x, y; \omega^2) &= \frac{1} W \cases{u(x, \omega^2) v(y, \omega^2) & \text{ if } y \le x \cr u(y, \omega^2) v(x, \omega^2) & \text{ if } x \le y,}
\nonumber\end{align}
where
$v$ is a solution that
satisfies the boundary condition at 0,
$u$ is a solution that
satisfies the boundary condition at 1
and
$W$
is the Wronskian of
$u, v.$
Here we can take
$v = s$
and
$u = s(1, \omega^2) c - c(1, \omega^2) s.$
Then
\begin{align}
 W &= \left|\nonpmatrix{s(1, \omega^2) c - c(1, \omega^2) s & s \cr s(1, \omega^2) c' - c(1, \omega^2) s' & s'} \right| = s(1, \omega^2),
\nonumber\end{align}
so
\begin{align}
 R(x, y; \omega^2) &= \frac{1}{s(1, \omega^2)} \cases{(s(1, \omega^2) c(x, \omega^2) - c(1, \omega^2) s(x, \omega^2))\,s(y, \omega^2) & \text{ if } y \le x \cr (s(1, \omega^2) c(y, \omega^2) - c(1, \omega^2) s(y, \omega^2))\,s(x, \omega^2) & \text{ if } x \le y.}
\nonumber\end{align}
Now
$R$
may not be differentiable on the diagonal, but off the diagonal we find
\begin{align}
 \frac{\partial^2}{\partial x \partial y}R(x, y; \omega^2) &= \frac{1}{s(1, \omega^2)} \cases{(s(1, \omega^2) c'(x, \omega^2) - c(1, \omega^2) s'(x, \omega^2))\,s'(y, \omega^2) & \text{ if } y < x \cr (s(1, \omega^2) c'(y, \omega^2) - c(1, \omega^2) s'(y, \omega^2))\,s'(x, \omega^2) & \text{ if } x < y,}
\nonumber\end{align}
which extends continuously to the diagonal, giving in particular
\begin{align}
 \frac{\partial^2}{\partial x \partial y}R(1, 1; \omega^2) &= \frac{1}{s(1, \omega^2)}\,\left(s(1, \omega^2) c'(1, \omega^2) - c(1, \omega^2) s'(1, \omega^2)\right)\,s'(1, \omega^2)
= - \frac{s'(1, \omega^2)}{s(1, \omega^2)},
\nonumber\end{align}
using the Wronskian of
$c$ and $s$.
\par
On the other hand, the resolvent kernel can be expressed in terms of the
normalized eigenfunctions as
\begin{equation*}
   R(x, y; \omega^2) \;=\; \sum_{j=0}^\infty \frac{s(x, \mu_j) s(y, \mu_j)}{(\mu_j - \omega^2)\,l_j^2},
\end{equation*}
and differentiating with respect to $x$ and $y$
here and equating the two formulae for the resolvent kernel, we deduce
\begin{equation}
 \sum_{j=0}^\infty \frac{s'(1, \mu_j)^2}{(\mu_j - \omega^2)\,l_j^2} \;=\; - \frac{s'(1, \omega^2)}{s(1, \omega^2)}.
\nonumber
\end{equation}
Hence we can calculate further, from the above partial-fraction decomposition,
\begin{align}
 A[\one_\alpha, \one_\beta] \;&=\; s(1, \overline \alpha^2)\,s(1, \beta^2) \sum_{j=1}^\infty \frac{s'(1, \mu_j)^2}{l_j^2} \left(\frac{1} \mu_j + \frac{\beta}{(\overline \alpha - \beta)(\mu_j - \overline \alpha^2)} + \frac{\overline \alpha}{(\beta - \overline \alpha)(\mu_j - \beta^2)}\right)\;+
\nonumber\\
 &\qquad +\; \frac{s'(1, 0)}{s(1, 0)}\,s(1, \overline \alpha^2)\,s(1, \beta^2)
\nonumber\\
 &=\; s(1, \overline \alpha^2)\,s(1, \beta^2) \left(- \frac{s'(1,0)}{s(1,0)} - \frac{\beta}{\overline \alpha - \beta}\,\frac{s'(1, \overline \alpha^2)}{s(1, \overline \alpha^2)} - \frac{\overline \alpha}{\beta - \overline \alpha}\,{s'(1,\beta^2)}{s(1, \beta^2)} \right)\;+
\nonumber\\
 &\qquad +\; \frac{s'(1, 0)}{s(1, 0)}\,s(1, \overline \alpha^2)\,s(1, \beta^2)
\nonumber\\
 &=\; - \frac{\beta}{\overline \alpha - \beta}\,s(1, \beta^2) s'(1, \overline \alpha^2) - \frac{\overline \alpha}{\beta - \overline \alpha}\,s(1, \overline \alpha^2) s'(1,\beta^2) \\
 &=\; \one_\alpha(\beta) \;=\; B[\one_\alpha,\one_\beta],
\nonumber\end{align}
Since $A[\one_\alpha, \one_\beta]$ and $B[\one_\alpha,\one_\beta]$ are analytic in $\alpha$ and $\beta$,
the equality can be extended to $\alpha,\beta=\pm\sqrt{\mu_j}$.
This completes the proof.
\end{proof}

\begin{lemma}\label{lemma:ABiso}
Let the potential $q\in L^2[0,1]$ satisfy $\mu_1>0$, $s'(1,0)\not=0$, and $s(1,0)\not=0$.  The Hilbert spaces $A_q$ and $B_q$ are isomorphic through the maps
\begin{equation}
  B \to A :: f \mapsto f\big|_{\{0\}\cup\{\pm\sqrt{\mu_j}\}_{j\in\NN}}
\end{equation}
and
\begin{equation}
  A \to B :: g \mapsto f,
  \qquad  f(\omega) = A[g,\one_\omega].
\end{equation}
\end{lemma}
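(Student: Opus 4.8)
\emph{The plan} is to use Lemma~\ref{lemma:AB} to transfer the reproducing‑kernel structure of $B_q$ to $A_q$. First I would record the standard fact that the linear span $D$ of the reproducing kernels $\{\one_\alpha:\alpha\in\CC\}$ is dense in $B_q$: any $h\in B_q$ orthogonal to every $\one_\alpha$ satisfies $h(\alpha)=B_q[h,\one_\alpha]=0$ for all $\alpha$, hence $h\equiv0$. On $D$, Lemma~\ref{lemma:AB} says precisely that the restriction map $V:f\mapsto f\big|_{\{0\}\cup\{\pm\sqrt{\mu_j}\}}$ preserves inner products into $A_q$: for $f=\sum_i c_i\one_{\alpha_i}$ one has $A_q[Vf,Vf]=\sum_{i,k}c_i\overline{c_k}\,A_q[\one_{\alpha_i},\one_{\alpha_k}]=\sum_{i,k}c_i\overline{c_k}\,B_q[\one_{\alpha_i},\one_{\alpha_k}]=B_q[f,f]$. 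Hence $V$ extends to an isometry $B_q\to A_q$, and this extension is again given by pointwise restriction: convergence in $B_q$ forces pointwise convergence at the sampling points by continuity of point evaluation, and convergence in $A_q$ does as well (each sampling value is controlled by the $A_q$‑norm). Thus $V$ embeds $B_q$ isometrically onto a \emph{closed} subspace of $A_q$, and, since elements of $B_q$ are entire, $Vf=g$ exhibits any $g$ in the range of $V$ as the restriction of an entire function.

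The crux is surjectivity, and here the key step is a reproducing identity inside $A_q$ itself: for every $g\in A_q$ and every $\omega\in\{0\}\cup\{\pm\sqrt{\mu_j}\}_{j\in\NN}$,
\[
  A_q[g,\one_\omega]\;=\;g(\omega).
\]
This follows from evaluations already available: by \eqref{oneateig} the kernel $\one_{\pm\sqrt{\mu_k}}$ vanishes at every sampling point other than $\pm\sqrt{\mu_k}$, by \eqref{enorm} its value there is the positive number $2\mu_k l_k^2$, and \eqref{eq:one} gives $\one_\alpha(0)=s'(1,0)\,s(1,\overline\alpha^2)$, so $\one_0$ vanishes at all $\pm\sqrt{\mu_j}$ and equals $s'(1,0)s(1,0)$ at $0$, while each $\one_{\pm\sqrt{\mu_k}}(0)=0$. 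Substituting into the definition of $A_q[\,\cdot\,,\cdot\,]$, the weights $1/(2\mu_j l_j^2)$ and $1/(s'(1,0)s(1,0))$ cancel exactly, leaving $g(\omega)$. Consequently, if $g\in A_q$ is orthogonal to the range of $V$, then $0=A_q[g,V\one_\omega]=A_q[g,\one_\omega]=g(\omega)$ for every sampling point $\omega$, so $A_q[g,g]=0$ and $g=0$; since the range of $V$ is closed, $V$ is onto.

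Being a surjective isometry, $V:B_q\to A_q$ is a Hilbert space isomorphism and preserves inner products, so I can identify its inverse: for $g\in A_q$ set $f=V^{-1}g\in B_q$; then for all $\omega\in\CC$,
\[
  f(\omega)\;=\;B_q[f,\one_\omega]\;=\;A_q[Vf,V\one_\omega]\;=\;A_q[g,\one_\omega],
\]
using the reproducing property in $B_q$ and $Vf=g$. This is exactly the map $A\to B::g\mapsto f$ with $f(\omega)=A[g,\one_\omega]$ stated in the lemma, and the two maps are mutually inverse by construction. I expect the main obstacle to be the reproducing identity in $A_q$: it is where one must check that the restrictions of $\one_0$ and $\one_{\pm\sqrt{\mu_j}}$ to the discrete set are precisely the coordinate functionals dual to the weights defining $A_q[\,\cdot\,,\cdot\,]$, the structural fact that makes $A_q$ a genuine reproducing‑kernel space on the Dirichlet sampling set. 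A minor preliminary is that $A_q$ is a Hilbert space — a weighted $\ell^2$‑type space — which needs the defining weights to be positive; this holds under the standing hypotheses (with $0$ below the relevant spectra, after adding a constant to $q$), and in any case completeness of $A_q$ also follows once $V$ is known to be a surjective isometry.
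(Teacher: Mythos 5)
Your proposal is correct and takes essentially the same route as the paper's proof: Lemma~\ref{lemma:AB} reduces the claim to showing that the kernels $\one_\omega$ span $B_q$ (via the reproducing property) and that their restrictions span $A_q$ (via the evaluations following~(\ref{oneateig}), which exhibit the restricted kernels as scaled coordinate functionals dual to the weights). You simply carry out explicitly the details the paper leaves implicit --- the isometric extension from the dense span, the reproducing identity inside $A_q$, and the derivation of the inverse formula $f(\omega)=A[g,\one_\omega]$.
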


\begin{proof}
We follow the proof of \cite[Lemma\,2]{McKeanTrubowitz1976a}.  Considering Lemma~\ref{lemma:AB}, we only have to show that the functions $\one_\omega$ span $B$ and their restrictions span $A$ in the sense of their linear algebraic spans being dense.
To see that the functions $\one_\omega$ ($\omega\in\CC$) span $B$ observe that, for all $f\in B$, $B[f,\one_\omega]=0$ for all $\omega$ implies that $f=0$ in $B$ because of the reproducing-kernel property (\ref{reproducing}) of~$\one_\omega$.  For $A$, the computations after equation~(\ref{oneateig}) show that the restrictions of the functions $\one_\omega$ to the set $\{0\}\cup\{\sqrt{\mu_j}\}_{j\in\NN}$ for $\omega\in\{0\}\cup\{\sqrt{\mu_j}\}_{j\in\NN}$ span $A$.
\end{proof}

\begin{lemma}\label{lemma:ABspaces}
Let the potential $q\in L^2[0,1]$ satisfy $\mu_1>0$, $s'(1,0)\not=0$, and $s(1,0)\not=0$.
The space $A_q$ with inner product $A[f,g]$ is a Hilbert space that contains the same functions as the classical space $\ell^2(\ZZ)$; and the norms of $A$ and $\ell^2$ are equivalent. The Hilbert space $B_q$ contains the same functions as $\E$; and the norms of $B$ and $L^2(\RR)$ are equivalent.
\end{lemma}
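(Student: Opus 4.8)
The plan is to treat the two assertions separately; each reduces to a comparison of norms once the underlying sets of functions have been identified.

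Consider first $A_q$. By construction it is a weighted sequence space: identifying the sampling set $\{0\}\cup\{\pm\sqrt{\mu_j}\}_{j\in\NN}$ with $\ZZ$ through $0\mapsto0$ and $\pm\sqrt{\mu_j}\mapsto\pm j$, one has $A[f,f]=\sum_{n\in\ZZ}w_n|f_n|^2$ with $w_0=(s'(1,0)\,s(1,0))^{-1}$ and $w_{\pm j}=(2\mu_j l_j^2)^{-1}$, so that ``$A_q=\ell^2(\ZZ)$ with equivalent norms'' is exactly the statement $0<\inf_n w_n\le\sup_n w_n<\infty$. Each $w_n$ is a positive real number: for $n=\pm j$ because $\mu_j>0$ (as $\mu_1>0$ and the Dirichlet eigenvalues increase) and $l_j^2=\int_0^1 s(x,\mu_j)^2\,dx>0$, and for $n=0$ because Lemma~\ref{lemma:AB} applied to $\one_0$ --- which by \eqref{oneateig} vanishes at every $\pm\sqrt{\mu_j}$, so that $A[\one_0,\one_0]=w_0\,(\one_0(0))^2=w_0\,(s'(1,0)s(1,0))^2=s'(1,0)s(1,0)$ --- gives $s'(1,0)s(1,0)=A[\one_0,\one_0]=B[\one_0,\one_0]=\|\one_0\|_B^2>0$. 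It then remains to control the tail, which follows from the classical asymptotics $\mu_j=j^2\pi^2+O(1)$ and $l_j^2=\tfrac{1}{2j^2\pi^2}(1+o(1))$ \cite[\S2]{PoschelTrubowitz1987}: these give $2\mu_j l_j^2\to1$, hence $w_{\pm j}\to1$ as $j\to\infty$. Therefore $\inf_n w_n>0$ and $\sup_n w_n<\infty$, so $A_q=\ell^2(\ZZ)$ as a set with equivalent norms, and in particular $A_q$ is complete.

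Consider now $B_q$. By Lemma~\ref{lemma:debranges}, $|e(\omega)|^2=1+o(1)$ as $|\omega|\to\infty$ along $\RR$, and $e$ has no real zero (a nonzero real zero is excluded there, and $e(0)=s'(1,0)\neq0$), so $0<c_1\le|e(\omega)|^2\le c_2<\infty$ on $\RR$. Hence $L^2(\RR,d\lambda/|e(\lambda)|^2)$, which by definition carries the $B_q$-norm, contains precisely the functions of $L^2(\RR,d\lambda)$, with equivalent norms; it therefore suffices to show that $B_q$ consists of the same entire functions as the Paley--Wiener space $\mathrm{PW}$ of entire functions of exponential type $\le1$ that are square-integrable on $\RR$ --- note that $\E$ is the subspace of $\mathrm{PW}$ of functions real on $\RR$ and $\mathrm{PW}=\E\oplus i\,\E$. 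From \eqref{cpert} and its analogue for $s$, $\omega\,s(1,\omega^2)=\sin\omega+O(e^{|\Im\omega|}/|\omega|)$ and $s'(1,\omega^2)=\cos\omega+O(e^{|\Im\omega|}/|\omega|)$, whence $e(\omega)=e^{-i\omega}+O(e^{|\Im\omega|}/|\omega|)$; in particular $|e(\omega)|\le Ce^{|\Im\omega|}$, so $e$ is of exponential type $1$, and $e(\omega)=e^{-i\omega}(1+O(1/|\omega|))$ uniformly in $\overline{\CC^+}$. The inclusion $B_q\subseteq\mathrm{PW}$ is then the standard de Branges fact that a de Branges space is contained in the Paley--Wiener space of the exponential type of its structure function \cite[Ch.\,2]{deBranges1968}: an $f\in B_q$ lies in $L^2(\RR)$ since $|e|$ is bounded above there, and $f/e$ being of bounded type and nonpositive mean type in $\CC^+$ together with $e$ of exponential type $1$ forces $f$ of exponential type $\le1$. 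For the converse $\mathrm{PW}\subseteq B_q$, take $f\in\mathrm{PW}$ and write $f(\omega)=\int_{-1}^1 g(t)\,e^{i\omega t}\,dt$ with $g\in L^2[-1,1]$ (Paley--Wiener); a direct estimate gives $|f(\omega)|,|f(\bar\omega)|\le\|g\|_2\,e^{|\Im\omega|}/\sqrt{2|\Im\omega|}$. Since $\mu_1>0$, Lemma~\ref{lemma:debranges} shows $e$ has no zeros in $\overline{\CC^+}$, so $|e|\ge\delta>0$ on the compact set $\{|\omega|\le M,\ \Im\omega\ge0\}$, while $|e(\omega)|\ge\tfrac12 e^{\Im\omega}$ for $|\omega|>M$ with $\Im\omega\ge0$; combining these with the bound on $f$ yields $|f(\omega)/e(\omega)|,|\overline{f(\bar\omega)}/e(\omega)|\le C_f/\sqrt{\Im\omega}$ for $\Im\omega>0$, and $\int_\RR|f/e|^2\,d\lambda\le c_1^{-1}\int_\RR|f|^2\,d\lambda<\infty$. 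Hence $f\in B_q$, so $B_q=\mathrm{PW}$, and the norm equivalence follows from the bounds on $|e|$ on $\RR$.

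The step I expect to be the main obstacle is the reverse inclusion $\mathrm{PW}\subseteq B_q$: verifying the de Branges half-plane estimate for an arbitrary $f\in\mathrm{PW}$ requires the precise asymptotic $e(\omega)=e^{-i\omega}(1+O(1/|\omega|))$ in $\overline{\CC^+}$ to bound $1/e$ from below where $|\omega|$ is large, together with a compactness argument --- using $\mu_1>0$, so that $e$ has no zeros in $\overline{\CC^+}$ --- to handle small $|\omega|$; the remaining steps are routine given the cited asymptotics and Lemma~\ref{lemma:debranges}. One point to flag in the write-up: $B_q$, being a de Branges space, is a complex Hilbert space, so ``$B_q$ contains the same functions as $\E$'' is to be read as ``$B_q$ is the complexification of $\E$'', i.e.\ the elements of $B_q$ that are real on $\RR$ are precisely those of $\E$.
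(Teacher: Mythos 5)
Your proposal is correct, and for the two load-bearing steps it follows the same route as the paper: the $A$-part rests on the asymptotics $l_j^2=\tfrac{1}{2\mu_j}+O(\mu_j^{-3/2})$, hence $2\mu_j l_j^2\to1$, and the inclusion $\E\subseteq B_q$ rests on the Paley--Wiener representation of $f$, the resulting bound $|f(\omega)|^2=O(e^{2|\Im\omega|}/|\Im\omega|)$, and the asymptotic $e(\omega)=e^{-i\omega}(1+O(1/|\omega|))$ to verify the de Branges half-plane estimate. The one genuine divergence is the reverse inclusion $B_q\subseteq\mathrm{PW}$: the paper shows each reproducing kernel $\one_\omega$ lies in $\E$ (order $1$, type $1$, $\one_\omega(z)\simeq(z-\bar\omega)^{-1}\in L^2(\RR)$) and concludes $B=\overline{\Span\{\one_\omega\}}\subseteq\E$ using closedness of $\E$, whereas you invoke the general de Branges/Krein fact that elements of a de Branges space inherit the exponential type of the structure function. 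Both work; the paper's argument is self-contained and reuses the kernel computations already in hand, while yours outsources the inclusion to a cited theorem but avoids having to argue that the $B$-closure of the span stays inside $\E$ (a point the paper passes over rather quickly, since closedness of $\E$ is with respect to the $L^2(\RR)$ norm and one must use the norm equivalence to transfer it to $B$). You also supply a detail the paper omits entirely: positive definiteness of $A[\cdot,\cdot]$ requires $s'(1,0)\,s(1,0)>0$, not merely both factors nonzero, and your derivation of this from $s'(1,0)s(1,0)=\one_0(0)=B[\one_0,\one_0]>0$ (using Lemma~\ref{lemma:AB} or just the reproducing property together with $\one_0=s'(1,0)\,s(1,\cdot^2)\not\equiv0$) is a worthwhile addition. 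Your closing remark that the set-equality with $\E$ should be read through the complexification $\mathrm{PW}=\E\oplus i\E$ is likewise a fair clarification of the statement as written.
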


\begin{proof}
Using the estimates in \cite[\S1.2]{LevitanSargsjan1991a}
or \cite[\S1 Theorem\,3]{PoschelTrubowitz1987}, 
\begin{equation}
  l_j^2=\int_0^1 s(x,\mu_j)^2 \;= \int_0^1 \frac{\sin^2(\sqrt{\mu_j}x)}{\mu_j}\ dx +O(\mu_j^{-3/2}) \;=\; \frac{1}{2\mu_j} +O(\mu_j^{-3/2})
\end{equation}
as $j\to\infty$.  Therefore $$0<\frac{1}{\mu_jl_j^2}\;=\;2+O(\mu_j^{-1/2}),$$
and the first statement follows.

We now turn to the claim about $B$ and consider
$$\one_\omega(z)= \frac{\overline{e(\omega)}e(z)-e(\bar{\omega})\overline{e(\bar{z})}}{-2i(z-\bar{\omega})}.$$
The singularity at $z=\bar{\omega}$ is removable, so $\one_\omega$ is entire.
In view of the definition~(\ref{e}) of $e(z)$, since $z s(1,z^2)$ and $s'(1,z^2)$ are of order $1$ and type $1$,
$\one_\omega$ is of order $1$ and type $1$.  Moreover, for $z\in\RR$, the functions $z s(1,z^2)$ and $s'(1,z^2)$ are bounded, so $\one_\omega(z)\simeq(z-\bar\omega)^{-1}\in L^2$. Therefore, $\one_\omega\in\E$.
By the proof of \cite[Lemma 2]{McKeanTrubowitz1976a}, the functions $\one_\omega$ span $B$ and since $\E$ is closed, we have
$$B=\overline{\Span\{\one_\omega\}}\subseteq \E.$$

We next show that $\E\subseteq B$. Lemma \ref{lemma:debranges} shows that the $L^2$-norm is equivalent to $B[f,f]$. As $f\in\E\subseteq L^2$, we have  that $B[f,f]<\infty$. For $f$ to lie in the de Branges space, it remains to show that for $\omega\in\CC^+$ we have
$$\frac{|f(\omega)|^2}{|e(\omega)|^2}\leq \frac{C}{\Im\ \omega}$$
for some constant $C>0$.

For $f\in\E$, by the Paley-Wiener Theorem, $\hat{f}$ is supported in $[-1,1]$, so
$$f(\omega)=\frac{1}{\sqrt{2\pi}}\int_{-1}^1 e^{-i\omega x}\hat{f}(x)\ dx.$$
Thus, setting $\Im\,\omega=b$, by Cauchy-Schwarz,
\begin{equation}\label{eq:estf}
|f(\omega)|^2\leq \frac{1}{2\pi}\int_{-1}^1|\hat{f}(x)|^2\ dx \cdot \int_{-1}^1 e^{2bx}\ dx = O\left(\frac{\sinh{2b}}{b}\right).
\end{equation}

By \cite[Theorem 3]{PoschelTrubowitz1987},
\begin{eqnarray*}
s(1,\omega^2) &= & \frac{\sin(\omega)}{\omega} +O\left(\frac{e^b}{|\omega|^2}\right),\\
\omega s(1,\omega^2) &= & \sin(\omega) +O\left(\frac{e^b}{|\omega|}\right),\\
s'(1,\omega^2) &= & \cos(\omega) +O\left(\frac{e^b}{|\omega|}\right),\\
e(\omega)&=& \cos(\omega)-i\sin(\omega)+O\left(\frac{e^b}{|\omega|}\right) \ =\ e^{-i\omega}\left(1+O\left(\frac{1}{|\omega|}\right)\right).
\end{eqnarray*}
Therefore, 
$$e(\omega)^{-1}=e^{i\omega}\left(1+O\left(\frac{1}{|\omega|}\right)\right),$$
so 
$$ \left|e(\omega)\right|^{-2}=e^{-2b}\left(1+O\left(\frac{1}{|\omega|}\right)\right),$$
and combined with \eqref{eq:estf}, we have that
$$ \left|\frac{f(\omega)}{e(\omega)}\right|^2 = O\left(\frac{\sinh{2b}}{b}e^{-2b}\right)=O\left(\frac{1}{b}\right).$$
Thus, the required estimate is satisfied away from the real axis. By Lemma \ref{lemma:debranges}, $1/|e(\omega)|$ is bounded on the real axis, and since $f$ is square integrable, this implies that the estimate holds on $\CC^+$.
\end{proof}

\begin{theorem}\label{thm:interpolation}
Let $q\in L^2[0,1]$ be given, and let $(\mu_j)_{j\in\NN}$ be the Dirichlet spectral sequence of $-d^2/dx^2+q(x)$ on $[0,1]$.
The restriction of functions in $\Ehalf$ to $(\mu_j)_{j\in\NN}$ is a bounded linear bijection from $\Ehalf$ and $\ell^2_1(\NN)$.
The inverse is effected through the following interpolation formula:  For $\lambda\in\CC$,
\begin{equation}
  \phi(\lambda) \;=\; \sum_{j=1}^\infty \phi(\mu_j) \prod_{i\not=j} \frac{\mu_i-\lambda}{\mu_i-\mu_j}.
\end{equation}
\end{theorem}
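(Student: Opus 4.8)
\medskip
\noindent\emph{Sketch of the approach.}\quad
The plan is to transfer everything, via the substitution $\lambda=\omega^2$, into the de\,Branges space $B_q=\E$ and its sequence model $A_q\cong\ell^2$, where the desired bijection is essentially Lemma~\ref{lemma:ABiso}. First I would remove the standing hypotheses of Lemmas~\ref{lemma:debranges}--\ref{lemma:ABspaces}: adding a large real constant $\tau$ to $q$ shifts the Dirichlet spectrum to $(\mu_j+\tau)_j$ and, for suitable $\tau$, makes $\mu_1+\tau>0$, $s(1,\tau)\neq0$, $s'(1,\tau)\neq0$, i.e.\ places $q+\tau$ under those hypotheses. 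The map $\phi\mapsto\phi_\tau$, $\phi_\tau(\lambda):=\phi(\lambda-\tau)$, is a linear bijection of $\Ehalf$ onto itself (order $1/2$, type $\le1$, realness, and the weighted $L^2$ bound are all stable under a real translation) and satisfies $\phi_\tau(\mu_j+\tau)=\phi(\mu_j)$, so the statement for $q$ follows from that for $q+\tau$. Assume henceforth $\mu_1>0$, $s(1,0)\neq0$, $s'(1,0)\neq0$, and set $S=\{0\}\cup\{\pm\sqrt{\mu_j}\}_{j\in\NN}$.

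The key device is that $\phi\in\Ehalf$ if and only if $g_\phi(\omega):=\omega\,\phi(\omega^2)$ lies in $\E$, in which case $g_\phi$ is odd; conversely every odd member of $\E$ equals $g_\phi$ for a unique $\phi$ (divide by $\omega$, obtaining an even entire function, hence a function of $\omega^2$). Indeed order $1/2$/type $\le1$ for $\phi$ corresponds to order $\le1$/type $\le1$ for $g_\phi$, realness transfers, and a change of variables gives $\int_0^\infty|\phi(\lambda)|^2\lambda^{\half}\,d\lambda=\int_\RR|g_\phi(\omega)|^2\,d\omega$; so $\phi\mapsto g_\phi$ is a norm-preserving linear bijection of $\Ehalf$ onto the closed subspace of odd functions in $\E$. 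By Lemma~\ref{lemma:ABspaces} this subspace lies in $\E=B_q$; by Lemma~\ref{lemma:ABiso} restriction to $S$ is an isomorphism $B_q\to A_q$; and since $g_\phi(0)=0$ and $g_\phi(\pm\sqrt{\mu_j})=\pm\sqrt{\mu_j}\,\phi(\mu_j)$, the definition of $A_q[\cdot,\cdot]$ gives
$$A_q\big[g_\phi,g_\phi\big]=\sum_{j=1}^\infty\frac{1}{2\mu_jl_j^2}\Big(|g_\phi(\sqrt{\mu_j})|^2+|g_\phi(-\sqrt{\mu_j})|^2\Big)=\sum_{j=1}^\infty\frac{|\phi(\mu_j)|^2}{l_j^2}\,.$$
From the asymptotics $(\mu_jl_j^2)^{-1}=2+O(\mu_j^{-1/2})$ in the proof of Lemma~\ref{lemma:ABspaces} and $\mu_j=j^2\pi^2+O(1)$ one gets $l_j^{-2}\asymp j^2$, so the right-hand side is comparable to $\|(\phi(\mu_j))_j\|_{\ell^2_1}^2$. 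Combined with the norm equivalences $B_q\sim L^2(\RR)$, $A_q\sim\ell^2$ of Lemma~\ref{lemma:ABspaces}, this shows $\phi\mapsto(\phi(\mu_j))_j$ is bounded from $\Ehalf$ to $\ell^2_1$ and bounded below; injectivity is then immediate (if all $\phi(\mu_j)$ vanish then $g_\phi|_S=0$, so $g_\phi=0$ in $B_q$, so $\phi\equiv0$).

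For surjectivity, given $(c_j)_j\in\ell^2_1$ I would set $h(0)=0$, $h(\pm\sqrt{\mu_j})=\pm\sqrt{\mu_j}\,c_j$; then $A_q[h,h]=\sum_j c_j^2/l_j^2<\infty$, so $h\in A_q$, and Lemma~\ref{lemma:ABiso} produces $g\in B_q$ with $g|_S=h$. The one genuinely new point is that $g$ must be \emph{odd}. From $e(-\bar\omega)=\overline{e(\omega)}$ (read off from (\ref{e}) using realness of $s,s'$) one has $|e(-\omega)|=|e(\omega)|$ on $\RR$ and $|e(\omega)|=|e(-\bar\omega)|$ for $\Im\omega>0$, with $\Im(-\bar\omega)=\Im\omega$; inserting this into the two de\,Branges growth bounds for $g$ shows that $\widetilde g(\omega):=-g(-\omega)$ also lies in $B_q$. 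Since $h$ is odd on the symmetric set $S$, $\widetilde g|_S=h=g|_S$, so injectivity of restriction forces $\widetilde g=g$, i.e.\ $g$ is odd. Hence $g=g_\phi$ for a unique $\phi\in\Ehalf$, and $\phi(\mu_j)=g(\sqrt{\mu_j})/\sqrt{\mu_j}=c_j$.

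Finally, the interpolation formula is just the reconstruction $g_\phi(\omega)=A_q[g_\phi,\one_\omega]$ of Lemma~\ref{lemma:ABiso} written out. Evaluating $\overline{\one_\omega(\pm\sqrt{\mu_j})}$ from (\ref{oneateig}), using $s'(1,\mu_j)\,\partial_\lambda s(1,\mu_j)=l_j^2$ (see (\ref{enorm})), discarding the node $0$ since $g_\phi(0)=0$, and combining the $\pm\sqrt{\mu_j}$ contributions yields
$$g_\phi(\omega)=\omega\sum_{j=1}^\infty\phi(\mu_j)\,\frac{s(1,\omega^2)}{\partial_\lambda s(1,\mu_j)\,(\omega^2-\mu_j)}\,;$$
dividing by $\omega$ and putting $\lambda=\omega^2$ gives $\phi(\lambda)=\sum_j\phi(\mu_j)\,s(1,\lambda)\big[\partial_\lambda s(1,\mu_j)(\lambda-\mu_j)\big]^{-1}$, and the Hadamard product $s(1,\lambda)=s(1,0)\prod_i(1-\lambda/\mu_i)$ (legitimate since $\mu_1>0$, $s(1,0)\neq0$) rewrites each summand as $\phi(\mu_j)\prod_{i\neq j}\frac{\mu_i-\lambda}{\mu_i-\mu_j}$. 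Each term is entire (the zero of $s(1,\cdot)$ at $\mu_j$ cancels the pole), and the series converges absolutely and locally uniformly since $(\phi(\mu_j))_j\in\ell^2_1\subset\ell^1$ and $|\partial_\lambda s(1,\mu_j)|^{-1}|\lambda-\mu_j|^{-1}=O(1)$ on compacta. I expect the main obstacles to be exactly the two structural points above: checking that $\phi\mapsto g_\phi$ identifies $\Ehalf$ with the odd part of $\E$ at the level of all three defining conditions, and the reflection argument that the $B_q$-preimage of an odd node-sequence is odd; once these are in place, the rest is transport through the isomorphisms $A_q\cong B_q\cong\E$ and $A_q\cong\ell^2$ already established, together with the routine shift reduction.
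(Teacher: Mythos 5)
Your proposal follows essentially the same route as the paper's proof: the same shift reduction to $\mu_1>0$, $s(1,0)\neq0$, $s'(1,0)\neq0$, the same transfer $\phi\mapsto\pm\omega\phi(\omega^2)$ onto the odd part of $\E=B_q$, the same passage through the isomorphisms $B_q\cong A_q\cong\ell^2$ of Lemmas~\ref{lemma:ABiso} and~\ref{lemma:ABspaces}, and the same reproducing-kernel/Hadamard computation for the interpolation formula. The one place you go beyond the paper is the explicit reflection argument (via $e(-\bar\omega)=\overline{e(\omega)}$) showing that the $B_q$-preimage of an odd node-sequence is odd --- a point the paper asserts implicitly when it restricts its chain of bijections to the odd subspaces --- and that argument is correct.
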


\begin{proof}
Since the space $\Ehalf$ is invariant under shifts $\phi(\lambda)\mapsto\phi(\lambda+c)$, with $c\in\RR$, 
it suffices to prove the theorem for potentials $q$ such that $\mu_1>0$, $s'(1,0)\not=0$, and $s(1,0)\not=0$, as this can be accomplished by shifting the potential by a constant.
Let $q$ satisfy these three spectral properties for the rest of the proof.

The transformation
\begin{equation}\label{eq:half-odd}
  \phi \mapsto f,
  \qquad
  f(\omega) = -\omega\phi(\omega^2)
\end{equation}
is a bijection from the space of entire functions of order $\leq1/2$ and type $\leq1$ to the space of odd entire functions of order $\leq1$ and type $\leq1$.  It maps $\Ehalf$ onto the space of odd functions in $\E$ because
\begin{equation}
  \int_\RR |f(\omega)|^2d\omega
  \;=\; 2\int_0^\infty |\omega\phi(\omega^2)|^2 d\omega
  \;=\; \int_0^\infty |\phi(\lambda)|^2\lambda^{1/2} d\lambda.
\end{equation}

Now consider 
\begin{equation}\label{eq:odd-one}
(a_j)_{j\in\ZZ} \;\mapsto\; (-\mu_j^{-1/2} a_j )_{j\in\NN}
\end{equation}
where $a_{-j} = - a_j$;
this is an isomorphism from the odd subspace of $\ell^2(\ZZ)$ to $\ell^2_1(\NN)$ because $\sqrt{\mu_j}\sim \pi j$ as $j\to\infty$. Therefore, we have the following succession of bounded linear bijections: From $\Ehalf$ to the odd functions in $\E$ given by \eqref{eq:half-odd}, then to odd functions in $B$ by Lemma \ref{lemma:ABspaces}, to odd sequences in $A$ by Lemma \ref{lemma:ABiso}, to odd sequences in $\ell^2(\ZZ)$ by Lemma \ref{lemma:ABspaces}, and finally to $\ell^2_1(\NN)$ by  \eqref{eq:odd-one}. This amounts to
 restriction of functions in $\Ehalf$ to $(\mu_j)_{j\in\NN}$, as
$$\phi \mapsto -\omega \phi(\omega^2)\mapsto -\omega \phi(\omega^2)\mapsto \big(\sqrt{\mu_j}\phi(\mu_j),-\sqrt{\mu_j}\phi(\mu_j)\big) \mapsto \big(\sqrt{\mu_j}\phi(\mu_j),-\sqrt{\mu_j}\phi(\mu_j)\big) \mapsto (\phi(\mu_j)),$$ 
 proving the first statement of the theorem.

To prove the interpolation, use Lemma~\ref{lemma:AB}, noting that $f(0)=0$, for $\omega\in\RR$,
\begin{align}
  -\omega\phi(\omega^2) \;=\; f(\omega) \;&=\; B[f,\one_\omega] \;=\; A[f,\one_\omega] \\
   &=\; \sum_{j=1}^\infty \frac{1}{2\mu_j\ell_j^2}
                   \left( f(-\sqrt{\mu_j}) \overline{\one_\omega(-\sqrt{\mu_j})}
                           + f(\sqrt{\mu_j}) \overline{\one_\omega(\sqrt{\mu_j})} \right) \\
   &=\; \sum_{j=1}^\infty \frac{1}{2\mu_j\ell_j^2}
   \left( f(-\sqrt{\mu_j})\frac{\omega\,s(1,\omega^2)s'(1,\mu_j)}{\omega+\sqrt{\mu_j}}
           +  f(\sqrt{\mu_j})\frac{\omega\,s(1,\omega^2)s'(1,\mu_j)}{\omega-\sqrt{\mu_j}} \right) \\
   &=\; \sum_{j=1}^\infty \frac{1}{2\mu_j\ell_j^2}
            \sqrt{\mu_j} \phi(\mu_j) \left( \omega s(1,\omega^2)s'(1,\mu_j) \right)
            \left( \frac{1}{\omega+\sqrt{\mu_j}} - \frac{1}{\omega-\sqrt{\mu_j}} \right) \\
   &=\; \sum_{j=1}^\infty \frac{1}{\ell_j^2}
             \frac{\phi(\mu_j)}{\mu_j-\omega^2}
             \left( \omega\, s(1,\omega^2)s'(1,\mu_j) \right) \\
   &=\; \omega \sum_{j=1}^\infty \phi(\mu_j) \frac{s(1,\omega^2)}{\dot s(1,\mu_j)(\mu_j-\omega^2)} \\
   &=\; -\omega \sum_{j=1}^\infty \phi(\mu_j) \prod_{i\not=j} \frac{\mu_i-\omega^2}{\mu_i-\mu_j}.
\end{align}
We have used the identity
\begin{equation}
  \ell_j^2 = s'(1,\mu_j)\dot s(1,\mu_j),
\end{equation}
where $\dot s = ds/d\lambda$, and the representation of the entire function $s(1,\lambda)$ of order $1/2$  in terms of its roots, given by the Hadamard factorization.
The formula for $\phi(\lambda)$ in the theorem follows.
\end{proof}

\section{Asymmetry classes in inverse spectral theory}

The goals of this section are (1) to establish a bijection between square integrable potentials $q$ and pairs $((\mu_n)_{n\in\NN},a)$ of spectral sequences and asymmetry functions; and (2) to establish the analyticity of this correspondence.  The first part rides on an inverse spectral theory of P\"oschel and Trubowitz~\cite{PoschelTrubowitz1987} for the Dirichlet Schr\"odinger operator on an interval.

\subsection{Bijective correspondence}\label{sec:bijection}

Let $\mu(q)=(\mu_n(q))_{n\in\NN}$ be the Dirichlet eigenvalue sequence for a potential $q\in L^2[0,1]$.  The image of $\mu$ on $L^2$ consists of all real strictly increasing sequences $(\mu_n)_{n\in\NN}$ of the form
\begin{equation}\label{sigma}
  \mu_n \;=\; \pi^2n^2 + c + \sigma_n,
\end{equation}
in which $c\in\RR$ and $(\sigma_n)_{n\in\NN}$ is in $\ell^2$.
The set of sequences $(\sigma_n)_{n\in\NN}$ such that (\ref{sigma}) is strictly increasing is an open set in $\ell^2$, and it is denoted by $\mathcal{S}$.
The spectral data introduced in~\cite{PoschelTrubowitz1987} are the constants 
\begin{equation}\label{kappa}
  \kappa_n(q) \;:=\; \log|s'(\mu_n(q),1;q)| \;=\; \log( (-1)^n s'(\mu_n(q),1;q)).
\end{equation}
The sequence $\kappa(q)=(\kappa_n(q))_{n\in\NN}$ lies in the space $\ell^2_1$.
It is proved in~\cite{PoschelTrubowitz1987} that the correspondence
\begin{equation}
  L^2 \to \RR\times\mathcal{S}\times\ell^2_1 \;::\;
  q(x) \mapsto \big(c,\,(\sigma_n)_{n\in\NN},\, (\kappa_n)_{n\in\NN} \big)
\end{equation}
is bijective and analytic.

By considering the Wronskian, we see that $c(\mu_n(q);q)s'(\mu_n(q);q)=1$. Therefore, the sequence $(\kappa_n(q))_{n\in\NN}$ is related to $a(\lambda;q)$ by evaluation at the Dirichlet spectrum of~$q$,
\begin{equation}\label{alphakappa}
\begin{split}
  \alpha_n(q) \;:=\; a(\mu_n(q);q) &\;=\; \onehalf\big( c(\mu_n(q); q) - s'(\mu_n(q);q) \big)\\
     &\;=\; \onehalf\big( s'(\mu_n(q);q)^{-1} - s'(\mu_n(q);q) \big)\\
     &\;=\; (-1)^n \onehalf \big( e^{-\kappa_n(q)} - e^{\kappa_n(q)} \big)\\
      &\;=\; (-1)^{n+1} \sinh \kappa_n(q).
\end{split}
\end{equation}
The correspondence $\kappa_n(q) \leftrightarrow \alpha_n(q)$ is a bijection of $\ell^2_1$, and therefore the data $(\alpha_n(q))_{n\in\NN}$ can be used instead of $(\kappa_n(q))_{n\in\NN}$, that is,
the correspondence
\begin{equation}
  L^2 \to \RR\times\mathcal{S}\times\ell^2_1 \;::\;
  q(x) \mapsto \big(c,\,(\sigma_n)_{n\in\NN},\, (\alpha_n)_{n\in\NN} \big)
\end{equation}
is bijective.

Let $p\in L^2[0,1]$ be given, and denote by $M(p)$ the set of potentials isospectral to~$p$,
\begin{equation}
  M(p) := \left\{ q\in L^2[0,1] : \mu(q) = \mu(p) \right\}.
\end{equation}
Proposition~\ref{prop:a}, the interpolation theorem, and the inverse spectral theory provide the following diagram.
\begin{equation}
\renewcommand{\arraystretch}{1.1}
\left.
\begin{array}{ccccccc}
  M(p) &\longrightarrow& \Ehalf &\longrightarrow& \ell^2_1 &\longrightarrow& M(p)  \\
  q &\longmapsto& a &\longmapsto& (a(\mu_n(p)))_{n\in\NN} &\longmapsto& q   
\end{array}
\right.
\end{equation}
The restriction map $\Ehalf\to\ell^2_1$ is a bijection by Theorem~\ref{thm:interpolation}.  The composite map from $M(p)\to M(p)$ is the identity map~\cite[Ch.3,Theorems\,4,5]{PoschelTrubowitz1987}; and the composite map $M(p)\to\ell^2_1$ is surjective~\cite[Ch.4,Theorems\,1*,3]{PoschelTrubowitz1987}.
This makes the map $M(p)\to\Ehalf$ a bijection, resulting in the following theorem.

\begin{theorem}
  The set of asymmetry functions $a(\lambda;q)$, as $q$ runs over $L^2[0,1]$, is equal to $\Ehalf$; and for each isospectral set $M(p)$, the correspondence $q\mapsto a(\cdot\,;q)$ is a bijection between $M(p)$ and~$\Ehalf$.
\end{theorem}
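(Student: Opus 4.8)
The plan is to assemble the three ingredients already in place — Proposition~\ref{prop:a}, the interpolation Theorem~\ref{thm:interpolation}, and the P\"oschel--Trubowitz parametrization of the isospectral manifold — into the commutative triangle sketched just before the statement, and then to run a short diagram chase. First I would fix $p\in L^2[0,1]$, write $\mu_n=\mu_n(p)$ for its Dirichlet eigenvalues and $c,(\sigma_n)_{n\in\NN}$ for the corresponding data in $\RR\times\mathcal{S}$, and introduce three maps. The map $F\colon M(p)\to\Ehalf$, $q\mapsto a(\cdot\,;q)$, is well defined by Proposition~\ref{prop:a} (for $q\in M(p)$ one has $\mu(q)=\mu(p)$, so the relevant spectral sequence is exactly $(\mu_n)$). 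The map $G\colon\Ehalf\to\ell^2_1(\NN)$, $\phi\mapsto(\phi(\mu_n))_{n\in\NN}$, is a bounded linear bijection by Theorem~\ref{thm:interpolation}. Finally $H\colon\ell^2_1(\NN)\to M(p)$ sends a sequence $(\alpha_n)_{n\in\NN}$ to the unique potential with P\"oschel--Trubowitz data $(c,(\sigma_n)_{n\in\NN},(\alpha_n)_{n\in\NN})$, the $\alpha$-coordinates replacing the $\kappa$-coordinates through the bijection $\kappa_n\leftrightarrow\alpha_n=(-1)^{n+1}\sinh\kappa_n$ of $\ell^2_1$; since $c$ and $(\sigma_n)$ are held fixed at the values for $p$, this potential is isospectral to $p$, so $H$ does land in $M(p)$.

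Next I would check the two composition facts. By construction $G\circ F$ sends $q\in M(p)$ to $(a(\mu_n;q))_{n\in\NN}=(\alpha_n(q))_{n\in\NN}$, and $H$ then returns the unique potential with data $(c,(\sigma_n),(\alpha_n(q)))$; because $(c,\sigma,\alpha)$ are global coordinates on $L^2[0,1]$, that potential is $q$ itself, so $H\circ G\circ F=\mathrm{id}_{M(p)}$ (this is~\cite[Ch.\,3, Theorems\,4,5]{PoschelTrubowitz1987}). On the other hand $G\circ F\colon M(p)\to\ell^2_1(\NN)$ is surjective, since as $q$ ranges over the isospectral set of $p$ its $\kappa$-data, equivalently its $\alpha$-data, ranges over all of $\ell^2_1$~\cite[Ch.\,4, Theorems\,1*,3]{PoschelTrubowitz1987}. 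From $H\circ(G\circ F)=\mathrm{id}$ the map $G\circ F$ is injective, hence a bijection onto $\ell^2_1(\NN)$; since $G$ is a bijection by Theorem~\ref{thm:interpolation}, it follows that $F\colon q\mapsto a(\cdot\,;q)$ is a bijection from $M(p)$ onto $\Ehalf$. In particular $F$ is onto $\Ehalf$, so every element of $\Ehalf$ is the asymmetry function of some potential; together with the inclusion $a(\cdot\,;q)\in\Ehalf$ from Proposition~\ref{prop:a} this identifies the set of all asymmetry functions with $\Ehalf$, and since $L^2[0,1]$ is the disjoint union of the sets $M(p)$ the first statement is in fact immediate from the second.

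As for where the difficulty lies: essentially all of it has already been discharged — Proposition~\ref{prop:a} for membership in $\Ehalf$, Theorem~\ref{thm:interpolation} for the interpolation bijection, and the imported inverse spectral theory for the structure of $M(p)$. What remains is mainly bookkeeping, and the one point to get exactly right is the precise form in which~\cite{PoschelTrubowitz1987} is invoked: that $q\mapsto(c,(\sigma_n),(\kappa_n))$ is a real-analytic bijection $L^2[0,1]\to\RR\times\mathcal{S}\times\ell^2_1$, that fixing $(c,\sigma)$ yields a bijection of $\ell^2_1$ onto the isospectral set $M(p)$, and that $\kappa\mapsto\alpha$ with $\alpha_n=(-1)^{n+1}\sinh\kappa_n$ is a bijection of $\ell^2_1$. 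With those facts the theorem is a formal consequence; the only genuinely new input beyond~\cite{PoschelTrubowitz1987} is Theorem~\ref{thm:interpolation}, which is what lets the $\alpha$-data be read off the entire function $a\in\Ehalf$ itself rather than off a bare sequence in $\ell^2_1$.
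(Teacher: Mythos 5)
Your proposal is correct and follows essentially the same route as the paper: the same commutative triangle $M(p)\to\Ehalf\to\ell^2_1\to M(p)$, with the same three inputs (Proposition~\ref{prop:a}, Theorem~\ref{thm:interpolation}, and the P\"oschel--Trubowitz results \cite[Ch.\,3, Theorems\,4,5; Ch.\,4, Theorems\,1*,3]{PoschelTrubowitz1987}) and the same diagram chase concluding that $q\mapsto a(\cdot\,;q)$ is a bijection of $M(p)$ onto $\Ehalf$. Your write-up merely makes the injectivity/surjectivity bookkeeping more explicit than the paper does.
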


As a corollary, we obtain the main result of the paper, as announced in the abstract.

\begin{theorem}\label{thm:main}
The set of asymmetry functions $a(\lambda;q)$, as $q$ runs over $L^2[0,1]$, is equal to $\Ehalf$.  Let $a\in\Ehalf$ be given.  The set of potentials possessing $a(\lambda)$ as its asymmetry function consists of one $q\in L^2[0,1]$ for each Dirichlet spectral sequence.
\end{theorem}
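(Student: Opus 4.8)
The plan is to obtain Theorem~\ref{thm:main} as a direct reinterpretation of the theorem immediately preceding it, the only extra ingredient being the fact that $L^2[0,1]$ is the disjoint union of the isospectral sets $M(p)$. First I would recall from the P\"oschel--Trubowitz theory cited above that the Dirichlet eigenvalue map $q\mapsto\mu(q)$ carries $L^2[0,1]$ onto the set of admissible sequences $\mu_n=\pi^2n^2+c+\sigma_n$ with $c\in\RR$ and $\sigma\in\mathcal{S}$, that $\mu(q_1)=\mu(q_2)$ holds precisely when $q_1,q_2$ lie in a common $M(p)$, and hence that every $q$ belongs to exactly one such $M(p)$, these being indexed bijectively by the admissible Dirichlet spectral sequences.

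Next I would fix $a\in\Ehalf$ and set $Q_a=\{q\in L^2[0,1]:a(\,\cdot\,;q)=a\}$, the asymmetry class to be described. For each admissible spectral sequence, pick a representative $p$ with that Dirichlet spectrum; the preceding theorem asserts that $q\mapsto a(\,\cdot\,;q)$ restricts to a bijection from $M(p)$ onto $\Ehalf$, so $Q_a\cap M(p)$ is a single potential. Since the sets $M(p)$ partition $L^2[0,1]$, this displays $Q_a$ as consisting of exactly one $q$ for each Dirichlet spectral sequence, which is the assertion. The first sentence of Theorem~\ref{thm:main}---that $q\mapsto a(\,\cdot\,;q)$ has range all of $\Ehalf$---is the first sentence of the preceding theorem restated; within the present framework it is the composition of the surjectivity of the restriction $\Ehalf\to\ell^2_1$ (Theorem~\ref{thm:interpolation}) with the surjectivity of $M(p)\to\ell^2_1$, together with the containment $a(\,\cdot\,;q)\in\Ehalf$ from Proposition~\ref{prop:a}.

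All the substance here is already in the preceding theorem and its supporting results---Proposition~\ref{prop:a}, Theorem~\ref{thm:interpolation}, and the P\"oschel--Trubowitz parametrization of $L^2[0,1]$ by $\RR\times\mathcal{S}\times\ell^2_1$---so I do not expect a genuine obstacle, only a bookkeeping point: one must check that ``one potential for each Dirichlet spectral sequence'' is literally the statement that $Q_a$ meets each fiber $M(p)$ in a singleton, with no admissible spectrum omitted or counted twice. The clean way to secure this is to run the commuting diagram $M(p)\to\Ehalf\to\ell^2_1\to M(p)$ for an arbitrary representative $p$ rather than for one special $p$, citing \cite[Ch.4, Theorems 1*, 3]{PoschelTrubowitz1987} for surjectivity of $M(p)\to\ell^2_1$ and \cite[Ch.3, Theorems 4, 5]{PoschelTrubowitz1987} for the composite $M(p)\to M(p)$ being the identity; the bijectivity of $M(p)\to\Ehalf$ then follows formally, and with it the theorem.
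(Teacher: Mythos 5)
Your proposal is correct and follows essentially the same route as the paper: the paper derives Theorem~\ref{thm:main} as an immediate corollary of the preceding theorem (the bijection $M(p)\to\Ehalf$ established via the diagram $M(p)\to\Ehalf\to\ell^2_1\to M(p)$ and the cited P\"oschel--Trubowitz results), combined with the fact that the isospectral sets $M(p)$ partition $L^2[0,1]$ and are indexed by the admissible Dirichlet spectral sequences. Your ``bookkeeping point'' about running the diagram for an arbitrary representative $p$ is exactly the content the paper leaves implicit.
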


\subsection{Analyticity}

We now establish the bi-analyticity of the map between potentials $q\in L^2[0,1]$ and spectral data
$(c,(\sigma_n)_{n\in\NN},a)$ $\in$ $\RR\times\mathcal{S}\times\Ehalf$.

\begin{theorem}\label{thm:analytic}
The map
\begin{equation*}
  L^2[0,1] \;\longrightarrow\; \RR\times\mathcal{S}\times\Ehalf  \;\;::\;\;
  q \;\longrightarrow\ \big(c,\,(\sigma_n)_{n\in\NN},\, a \big),
\end{equation*}
from potentials $q$ to the spectrum $(\mu_n \!=\! c + \sigma_n)_{n\in\NN}$ and asymmetry function $a$ of the Dirichlet Schr\"odinger operator $-d^2/dx^2+q(x)$ on $[0,1]$ is bi-analytic.  ($\mathcal{S}$ is defined before~(\ref{kappa}).)
\end{theorem}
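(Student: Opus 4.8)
The plan is to assemble the bi-analyticity from the analyticity of the constituent maps already invoked in the bijectivity discussion, composing them with the analytic structure of the interpolation isomorphism from Theorem~\ref{thm:interpolation}. Concretely, P\"oschel and Trubowitz prove that $q\mapsto (c,(\sigma_n)_{n\in\NN},(\kappa_n)_{n\in\NN})\in\RR\times\Ss\times\ell^2_1$ is bi-analytic; by \eqref{alphakappa}, $(\kappa_n)_{n\in\NN}\leftrightarrow(\alpha_n)_{n\in\NN}$ via $\alpha_n = (-1)^{n+1}\sinh\kappa_n$, which is a real-analytic diffeomorphism of $\ell^2_1$ onto itself (its derivative at any point is a bounded invertible multiplication-type operator on $\ell^2_1$, since $\cosh\kappa_n$ is bounded away from $0$ and $\infty$ on bounded sets; one checks Fr\'echet differentiability directly from the power series for $\sinh$). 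Hence $q\mapsto (c,(\sigma_n)_{n\in\NN},(\alpha_n)_{n\in\NN})$ is bi-analytic. The remaining task is to show that the map $(\alpha_n)_{n\in\NN}\mapsto a$ sending a Dirichlet-interpolation-data sequence to the entire function $a\in\Ehalf$ is analytic, as is its inverse $a\mapsto (a(\mu_n))_{n\in\NN}$ --- but crucially, the node sequence $(\mu_n)_{n\in\NN}$ itself depends on $q$, so this is a map on the total space, not a fixed linear isomorphism.

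First I would set up the correct domain: the map under study factors as $q\mapsto\big((c,(\sigma_n)_n),(\alpha_n)_n\big)\mapsto\big((c,(\sigma_n)_n),a\big)$, where in the second arrow the first coordinate determines the nodes $\mu_n = c+\sigma_n$ (using the convention $\mu_n = \pi^2n^2+c+\sigma_n$ as in \eqref{sigma}) and $a$ is recovered from $(\mu_n)_n$ and $(\alpha_n)_n$ through the interpolation formula of Theorem~\ref{thm:interpolation},
\begin{equation*}
  a(\lambda) \;=\; \sum_{j=1}^\infty \alpha_j \prod_{i\neq j}\frac{\mu_i-\lambda}{\mu_i-\mu_j}.
\end{equation*}
So the content reduces to: the map $\Phi:\Ss\times\ell^2_1\to\Ehalf$, $\big((\sigma_n)_n,(\alpha_n)_n\big)\mapsto a$ (with $c$ held as a harmless additional real parameter, or absorbed into $\sigma$ after a shift) is analytic, and likewise its inverse. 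For analyticity of $\Phi$ I would exhibit it as the composition of (i) the map sending $(\sigma_n)_n$ to the de\,Branges data --- but more efficiently, I would use Theorem~\ref{thm:interpolation}'s chain of isomorphisms and track $q$-dependence through them, the only non-trivial dependence being that of the norming constants $l_j^2$ and the values $s'(1,\mu_j)$, both of which P\"oschel--Trubowitz show to be analytic functions of $q$ (indeed of the spectral data). Since a map into a Hilbert space is analytic iff it is locally bounded and weakly analytic (Gateaux-analytic), and the interpolation series converges in $\Ehalf$ locally uniformly with coefficients depending analytically on the finite-dimensional-at-a-time data, one concludes analyticity of $\Phi$. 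For the inverse, $a\mapsto\big((\mu_n)_n\mapsto(a(\mu_n))_n\big)$: evaluation of an entire function at a point is analytic (it is linear and bounded $\Ehalf\to\CC$ for fixed $\lambda$, with bound locally uniform in $\lambda$), and the composition with the analytic node map $(\sigma_n)_n\mapsto(\mu_n)_n$ followed by projection to $\ell^2_1$ is then analytic by the chain rule for analytic maps between Banach spaces, the $\ell^2_1$-convergence being exactly the content of Proposition~\ref{prop:a}.

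The main obstacle, and where I would spend the most care, is the dependence of everything on the \emph{moving nodes} $\mu_n(q)$: the space $\Ehalf$ is fixed, but the isomorphism $\Ehalf\cong\ell^2_1$ of Theorem~\ref{thm:interpolation} is $q$-dependent, so "analyticity of the composite" is not merely analyticity of a fixed bounded linear map. One must show that the \emph{family} of these isomorphisms, parametrized by $(\sigma_n)_n\in\Ss$, depends analytically on the parameter --- i.e., that $(\sigma_n)_n\mapsto \big(\text{restriction operator }\Ehalf\to\ell^2_1\big)$ is an analytic map into $\mathcal{B}(\Ehalf,\ell^2_1)$, and similarly for its inverse. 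This I would establish by differentiating the interpolation series term-by-term in $(\sigma_n)_n$ and estimating the tail uniformly on balls, using $\sqrt{\mu_n}\sim\pi n$ and the $\ell^2$ control on $(\sigma_n)_n$ to dominate the Lagrange-type products; the key quantitative input is that $\prod_{i\neq j}\frac{\mu_i-\lambda}{\mu_i-\mu_j}$ and its $\sigma$-derivatives admit bounds of the form $C(1+|\lambda|)^{-1/2}$ times an $\ell^2$-summable-in-$j$ factor, locally uniformly, which is precisely the kind of estimate McKean--Trubowitz and P\"oschel--Trubowitz develop. Once this uniform-in-parameter analyticity of the interpolation isomorphism is in hand, the theorem follows by composing bi-analytic maps, and property~(3) from the list after Proposition~\ref{prop:a} is subsumed as the bijectivity on each isospectral set restated in analytic form.
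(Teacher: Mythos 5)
Your overall architecture matches the paper's for the first two links (the P\"oschel--Trubowitz bi-analyticity of $q\mapsto(c,(\sigma_n),(\kappa_n))$ and the $\sinh$ change of data), and you have correctly located the crux: the interpolation isomorphism $\Ehalf\cong\ell^2_1$ of Theorem~\ref{thm:interpolation} depends on the nodes $\mu_n(q)$, so the last link is not a fixed bounded linear map. But at exactly that point your argument has a genuine gap. You propose to prove analyticity of the operator-valued family $(\sigma_n)_n\mapsto\big(\text{restriction }\Ehalf\to\ell^2_1\big)$ (and of its inverse family) by ``differentiating the interpolation series term-by-term and estimating the tail uniformly on balls,'' with the key bound on $\prod_{i\neq j}(\mu_i-\lambda)/(\mu_i-\mu_j)$ asserted to be ``precisely the kind of estimate McKean--Trubowitz and P\"oschel--Trubowitz develop.'' Those estimates are not off-the-shelf; they are the actual content of the proof. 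The paper devotes Lemma~\ref{lemma:K} (a closed-form evaluation of the Lagrange products via $\prod_k(1-z^2/k^2)=\mathrm{sinc}\,\pi z$, yielding the explicit kernel $K(n,j)$) and Lemma~\ref{lemma:Kbounds} (a Schur-type bound: $K(n,n)$ uniformly bounded and $\sum_{n\neq j}|K(n,j)|^2<C$ locally uniformly in the data) to exactly this point, and without them nothing closes. Similarly, your appeal to Proposition~\ref{prop:a} gives membership of $(a(\mu_n))_n$ in $\ell^2_1$ for each fixed potential, not the uniform bound over a ball of spectral data that local boundedness requires.

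You also make the task harder than necessary. The paper first fixes coordinates on $\Ehalf$ once and for all by the bounded linear isomorphism $a\mapsto(a(\pi^2n^2))_{n\in\NN}$ (Theorem~\ref{thm:interpolation} at the fixed nodes $\pi^2n^2$), so the whole problem becomes the concrete sequence-space map $(c,(\sigma_n),(a_n))\mapsto(c,(\sigma_n),(\alpha_n))$ with $n\alpha_n=\sum_j ja_jK(n,j)$; it then verifies weak analyticity by an elementary directional-derivative computation, verifies local boundedness by the kernel bounds together with Young's inequality (Proposition~\ref{prop:atoalpha}), and obtains the reverse direction for free from the inverse function theorem for analytic maps between Banach spaces. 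In particular one never needs analyticity of an operator-valued family in $\mathcal{B}(\Ehalf,\ell^2_1)$, which is strictly stronger than what the criterion ``weakly analytic plus locally bounded'' demands of the composite map. To salvage your route you would have to actually prove the uniform product estimates --- and, in your formulation, estimates on their $\sigma$-derivatives as well; the sinc-product identity is the tool that makes this tractable, and it is absent from your plan.
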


This will be proved through the following maps:
\begin{equation}
\renewcommand{\arraystretch}{1.1}
\left.
\begin{array}{ccccccc}
  L^2[0,1] &\longleftrightarrow& \RR\times\mathcal{S}\times\ell^2_1 &\longleftrightarrow& \RR\times\mathcal{S}\times\ell^2_1 &\longleftrightarrow& \RR\times\mathcal{S}\times\Ehalf  \\
  q &\longleftrightarrow& \big(c,\,(\sigma_n)_{n\in\NN},\, (\kappa_n)_{n\in\NN} \big) &\longleftrightarrow& \big(c,\,(\sigma_n)_{n\in\NN},\, (\alpha_n)_{n\in\NN} \big) &\longleftrightarrow& \big(c,\,(\sigma_n)_{n\in\NN},\, a \big)
\end{array}
\right.
\end{equation}
Analyticity of the map given by the first arrow is due to P\"oschel and Trubowitz~\cite{PoschelTrubowitz1987}.
The analyticity implied by the second arrow is very easy because $\alpha_n = (-1)^{n+1} \sinh \kappa_n$.
The rest of this section establishes analyticity of the map given by the last arrow from right to left.
Because of the inverse function theorem for analytic functions in Banach spaces (see \cite[p.\,1081]{Whittlesey1965} or \cite[p.\,142]{PoschelTrubowitz1987}), analyticity in one direction implies analyticity in the other.

\smallskip

Theorem~\ref{thm:interpolation} provides an identification of $\Ehalf$ with $\ell^2_1$ through evaluation on the sequence $(\pi^2n^2)_{n\in\NN}$.  This is a bounded linear bijection of Hilbert spaces and is therefore analytic.  Denote the values of $a(\lambda)$ on this sequence by $(a_n)_{n\in\NN}$,
\begin{equation}
  a_n \;:=\; a(\pi^2n^2).
\end{equation}
Thus the analyticity of the map given by the third arrow pointing leftward is established by proving that the map
\begin{equation}\label{leftarrow}
  \big(c, (\sigma_n)_{n\in\NN}, (a_n)_{n\in\NN}\big) \;\;\mapsto\;\; \big(c, (\sigma_n)_{n\in\NN},  (a(n^2\pi^2+c+\sigma_n))_{n\in\NN} \big)
\end{equation}
from $\RR\times\mathcal{S}\times\ell^2_1$ to $\RR\times\mathcal{S}\times\ell^2_1$ is analytic.

By \cite[Theorem\,3,\,p.138]{PoschelTrubowitz1987}, analyticity of a map between an open subset of a complex Banach space to another Hilbert space is equivalent to the map satisfying two properties simultaneously, (1) weak analyticity of each projection to the elements of an orthonormal basis of the target space, and (2) local boundedness.
We therefore work with the complexification of $\RR\times\ell^2\times\ell^2_1$, namely
$(\RR\times\ell^2\times\ell^2_1)\otimes\CC = (\RR\times\ell^2\times\ell^2_1)+i(\RR\times\ell^2\times\ell^2_1)$.  We care only about analyticity on the open subset $\RR\times\mathcal{S}\times\ell^2_1$ of the real subspace of this complex Banach space.  This open subset is contained in an open subset $\CC\times\mathcal{S}_\CC\times(\ell^2_1\tensor\CC)$ of $(\RR\times\ell^2\times\ell^2_1)\otimes\CC$, in which
\begin{equation}
  \mathcal{S}_\CC \;=\; \bigcup\limits_{\sigma\in\mathcal{S}} U_\sigma,
\end{equation}
with $U_\sigma$ being an open neighborhood of $\sigma$ in $\ell^2\tensor\CC$ with $U_\sigma\cap\ell^2\subset\mathcal{S}$ so that
\begin{equation}
  \mathcal{S}_\CC \cap \ell^2 \;=\; \mathcal{S}.
\end{equation}

Weak analyticity of each coordinate is due to the calculation
\begin{multline}
  \frac{d}{dh}
  \left( 
    c+h\hat c, (\sigma_n+h\hat\sigma_n)_{n\in\NN}, ((a+h\hat a)(n^2\pi^2 + c + h\hat c + \sigma_n + h\hat\sigma_n))_{n\in\NN} \right) \\
    = \left( \hat c, (\hat \sigma_n)_{n\in\NN}, ( (\hat c + \hat\sigma_n) a'(n^2\pi^2+c+\sigma_n) + \hat a(n^2\pi^2 + c + \sigma_n )  )_{n\in\NN}
  \right).
\end{multline}
Now we need to prove local boundedness, which is the content of Proposition~\ref{prop:atoalpha}.
By applying Theorem~\ref{thm:interpolation} to $\Ehalf$ and $i\Ehalf$, an asymmetry function $a\in\EhalfC$ is recovered from $(a_n)_{n\in\NN}$ through interpolation,
\begin{equation}
  a(\lambda) \;=\; \sum_{j=1}^\infty a_j \prod_{k\not=j}  \frac{\pi^2k^2-\lambda}{\pi^2k^2-\pi^2j^2}
  \qquad
  \big(a_j = a(\pi^2j^2)\big).
\end{equation}
Therefore, we expect the formula
\begin{equation}\label{alphan}
  \alpha_n \;=\; a(\pi^2n^2+c+\sigma_n) \;=\;  \sum_{j=1}^\infty a_j \prod_{k\not=j} \frac{\pi^2k^2-\pi^2n^2-c_n}{\pi^2k^2-\pi^2j^2},
\end{equation}
in which $c_n=c+\sigma_n$, to extend the map (\ref{leftarrow}) to one from
$\CC\times\mathcal{S}_\CC\times(\ell^2_1\tensor\CC)$ to itself.

\begin{proposition}\label{prop:atoalpha}
  The map
\begin{equation}
  \left(c, (\sigma_n)_{n\in\NN}, (a_n)_{n\in\NN} \right)
  \quad\mapsto\quad
  \left(c, (\sigma_n)_{n\in\NN}, (\alpha_n)_{n\in\NN} \right),
\end{equation}
with $\alpha_n$ defined in (\ref{alphan}) is a locally bounded map from
$\CC\times\mathcal{S}_\CC\times(\ell^2_1\tensor\CC)$ to itself.
\end{proposition}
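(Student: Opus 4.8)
The plan is to prove local boundedness directly, by reducing to a sampling estimate for a Paley--Wiener function on the lattice $(n\pi)_{n\in\NN}$ perturbed by an $O(1/n)$ amount. Since the first two coordinates of the map are carried unchanged to the target, it suffices to bound $\|(\alpha_n)_{n\in\NN}\|_{\ell^2_1}$ locally uniformly in $(c,(\sigma_n)_{n\in\NN},(a_n)_{n\in\NN})$. Fix a point of $\CC\times\mathcal{S}_\CC\times(\ell^2_1\tensor\CC)$ and a neighborhood $\mathcal V$ of it on which $|c|$, $\|(\sigma_n)\|_{\ell^2\tensor\CC}$ and $\|(a_n)\|_{\ell^2_1\tensor\CC}$ are all bounded by a constant; then $c_n:=c+\sigma_n$ obeys $\sup_n|c_n|\le C$ on $\mathcal V$. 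By Theorem~\ref{thm:interpolation} applied to $\Ehalf$ and to $i\Ehalf$, the interpolation formula recovers from $(a_n)$ the function $a\in\EhalfC$ for which $\alpha_n$ as defined in \eqref{alphan} equals $a(\mu_n)$, with $\mu_n=n^2\pi^2+c_n$; via \eqref{eq:half-odd} one sets $f(\omega)=-\omega\,a(\omega^2)$, an odd entire function of exponential type $\le1$ with $\|f\|_{L^2(\RR)}^2=\int_0^\infty|a(\lambda)|^2\sqrt\lambda\,d\lambda$, and this norm is comparable to $\|(a_n)\|_{\ell^2_1}$, so $\|f\|_{L^2(\RR)}\le C'$ on $\mathcal V$. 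Because $f$ is odd, $a(\lambda)=-f(\sqrt\lambda)/\sqrt\lambda$ is well defined for every $\lambda\in\CC$ regardless of the branch of $\sqrt\lambda$, and the Paley--Wiener representation $f(\omega)=\tfrac1{\sqrt{2\pi}}\int_{-1}^1 e^{-i\omega x}\hat f(x)\,dx$ with $\|\hat f\|_{L^2[-1,1]}=\|f\|_{L^2(\RR)}$ yields $|f(\omega)|\le\tfrac1{\sqrt\pi}e^{|\Im\omega|}\|f\|_{L^2(\RR)}$ for all $\omega\in\CC$, and the same bound for $f'$, since $\widehat{f'}(x)=-ix\hat f(x)$ is supported in $[-1,1]$ and hence $\|f'\|_{L^2(\RR)}\le\|f\|_{L^2(\RR)}$.

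The heart of the argument is the estimate of $\alpha_n-a_n$. Since $a$ is entire,
\[
  \alpha_n-a_n \;=\; a(\mu_n)-a(n^2\pi^2) \;=\; c_n\int_0^1 a'\big(n^2\pi^2+t\,c_n\big)\,dt .
\]
Differentiating $a(\lambda)=-f(\sqrt\lambda)/\sqrt\lambda$ gives $a'(\lambda)=-\tfrac{f'(\sqrt\lambda)}{2\lambda}+\tfrac{f(\sqrt\lambda)}{2\lambda^{3/2}}$. For $n\ge N_0$, with $N_0$ depending only on $C$, and $t\in[0,1]$, the point $\xi=n^2\pi^2+t\,c_n$ lies in the right half-plane with $|\xi|\ge\tfrac12 n^2\pi^2$, and, for the principal root, $|\sqrt\xi-n\pi|\le C/(n\pi)$, so $|\Im\sqrt\xi|\le C/\pi$. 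The pointwise bounds above then give $|a'(\xi)|\le K\,\|f\|_{L^2(\RR)}\,n^{-2}$, hence $|\alpha_n-a_n|\le|c_n|\sup_{t\in[0,1]}|a'(n^2\pi^2+t\,c_n)|\le K\,C\,\|f\|_{L^2(\RR)}\,n^{-2}$ for $n\ge N_0$, so that $\sum_{n\ge N_0}n^2|\alpha_n-a_n|^2\le K^2C^2\|f\|_{L^2(\RR)}^2\sum_{n\ge1}n^{-2}$, uniformly on $\mathcal V$. The remaining finitely many indices $n<N_0$ contribute a uniformly bounded amount, because on $\mathcal V$ the points $\mu_n$ with $n<N_0$ stay in a fixed compact subset of $\CC$ and $|\alpha_n|=|f(\sqrt{\mu_n})|/|\mu_n|^{1/2}\le\tfrac1{\sqrt\pi}|\mu_n|^{-1/2}e^{|\Im\sqrt{\mu_n}|}\|f\|_{L^2(\RR)}$.

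Combining the two ranges, $\|(\alpha_n)\|_{\ell^2_1}\le\|(a_n)\|_{\ell^2_1}+\big(\sum_{n\ge N_0}n^2|\alpha_n-a_n|^2\big)^{1/2}+\big(\sum_{n<N_0}n^2|\alpha_n|^2\big)^{1/2}$ is bounded on $\mathcal V$ by a constant depending only on $\mathcal V$, using the comparability $\|f\|_{L^2(\RR)}\le C'\|(a_n)\|_{\ell^2_1}$ from Theorem~\ref{thm:interpolation}. Since the first two coordinates are unchanged, the map is locally bounded from $\CC\times\mathcal{S}_\CC\times(\ell^2_1\tensor\CC)$ to itself, as claimed. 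The main obstacle is the key estimate of the second paragraph: a crude growth bound on $a$ alone controls each difference $\alpha_n-a_n$ only by $O(1)$, which is nowhere near $\ell^2_1$-summable; one must exploit that membership of $a$ in $\EhalfC$ (order $\le1/2$, type $\le1$, weighted $L^2$) forces the $\lambda^{-1}$-decay of $a'$ near the points $n^2\pi^2$. A secondary technical point is that, working over the complexification, the real mean-value theorem is replaced by integration of the holomorphic derivative along the segment $[n^2\pi^2,\mu_n]$, along which one must check that $|\Im\sqrt\xi|$ and the branch of the square root remain under control.
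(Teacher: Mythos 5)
Your argument is correct in substance but takes a genuinely different route from the paper's. The paper treats $(a_n)\mapsto(\alpha_n)$ as an explicit infinite matrix: writing $n\alpha_n=\sum_j j\,a_j K(n,j)$, it computes $K(n,j)$ in closed form via the infinite-product representation of $\mathrm{sinc}$ (Lemma~\ref{lemma:K}) and then, through a case analysis in $r=|n-j|$, shows that the diagonal of $K$ is uniformly bounded while the off-diagonal part is Hilbert--Schmidt (Lemma~\ref{lemma:Kbounds}); Young's generalized inequality then gives the operator bound on $\ell^2_1\tensor\CC$. You instead return to the function $a\in\EhalfC$ recovered from $(a_n)$ by Theorem~\ref{thm:interpolation}, pass to $f(\omega)=-\omega a(\omega^2)$ in the Paley--Wiener space, and estimate the perturbation $\alpha_n-a_n=a(\mu_n)-a(n^2\pi^2)$ by integrating $a'$ along the segment, using the strip bounds $|f(\omega)|,|f'(\omega)|\lesssim e^{|\Im\omega|}\|f\|_{L^2(\RR)}$ together with $|\Im\sqrt{\mu_n}|=O(1)$ to get $|a'|=O(n^{-2}\|f\|_{L^2(\RR)})$ near $n^2\pi^2$. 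This is shorter and more conceptual---it even shows that $(\alpha_n-a_n)$ gains a full power of $n$ over what is needed---but it relies on the bounded invertibility of the restriction map in Theorem~\ref{thm:interpolation} (open mapping theorem) to convert $\|(a_n)\|_{\ell^2_1}$ into $\|f\|_{L^2(\RR)}$, whereas the paper's kernel computation stays entirely on the sequence side and produces the explicit matrix it then estimates.

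One small wrinkle to patch: for the finitely many indices $n<N_0$ you bound $|\alpha_n|$ by $|\mu_n|^{-1/2}e^{|\Im\sqrt{\mu_n}|}\|f\|_{L^2(\RR)}$, which degenerates if some $\mu_n=n^2\pi^2+c+\sigma_n$ approaches $0$ on $\mathcal V$ (possible for $n=1$ when the neighborhood is large). Since $f$ is odd, $f(\omega)/\omega$ is entire, so the maximum-modulus estimate $|a(\lambda)|\le\max_{|\omega|=R}|f(\omega)|/R\lesssim e^{R}\|f\|_{L^2(\RR)}/R$ for $|\lambda|\le R^2$ repairs this; everything else goes through.
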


The proof of the proposition rests on two lemmas.
The definition of $\alpha_n$ can be expressed as
\begin{equation}
  n\alpha_n \;=\; \sum_{j=1}^\infty j a_j K(n,j),
\end{equation}
in which
\begin{equation}
  K(n,j) \;=\; \frac{n}{j} \prod_{k\not=j} \frac{\pi^2k^2-\pi^2n^2-c_n}{\pi^2k^2-\pi^2j^2}
  \;=\;
  \renewcommand{\arraystretch}{1.1}
\left\{
\begin{array}{ll}
  \displaystyle
  \prod_{k\not=n}\Big( 1 - \frac{c_n}{\pi^2k^2-\pi^2n^2} \Big), & n=j \\
  \displaystyle
  \frac{n}{j} \cdot\frac{-c_n}{\pi^2n^2-\pi^2j^2} \prod_{k\not\in\{j,n\}} \frac{\pi^2k^2-\pi^2n^2-c_k}{\pi^2k^2-\pi^2j^2}, & n\not=j.
\end{array}
\right.
\end{equation}

\begin{lemma}\label{lemma:K}
If $j^2\not=n^2+c_n/\pi^2$, then
\begin{align}
    K(n,j)
    &\;=\; (-1)^{j+1} \frac{2nj}{j^2-n^2-c_n/\pi^2\,} \cdot \mathrm{sinc} \,\pi\sqrt{n^2+c_n/\pi^2} \\
    &\;=\; (-1)^{n+j+1} \frac{j^2}{j^2-n^2-c_n/\pi^2\,} \cdot \frac{2 e_n}{\pi\,j\sqrt{n^2+c_n/\pi^2\,}},
\end{align}
in which
\begin{equation}
  e_n = d_n\mathop{\mathrm{sinc}}\frac{d_n}{n},
  \qquad
  d_n = \frac{c_n/\pi}{1+\sqrt{1+\frac{c_n}{\pi^2 n^2}}},
\end{equation}
and the argument of the square root is taken to lie in $(-\pi,\pi]$.

If $n^2+c_n/\pi^2=j^2$ with $j>0$, then
\begin{equation}
  K(n,j) \;=\; \frac{n}{j},
\end{equation}
and if $n^2+c_n/\pi^2=0$,
\begin{equation}\label{eq:0}
  K(n,j) \;=\; (-1)^{j+1}\frac{2n}{j}.
\end{equation}
\end{lemma}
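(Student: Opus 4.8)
The plan is to reduce the whole computation to the Weierstrass product for the sine. Throughout, set $z := \sqrt{n^2 + c_n/\pi^2}$ with the branch prescribed in the statement, so that every numerator $\pi^2 k^2 - \pi^2 n^2 - c_n$ equals $\pi^2(k^2 - z^2)$ and hence
$$
K(n,j) \;=\; \frac{n}{j}\cdot\frac{-c_n/\pi^2}{n^2-j^2}\prod_{k\notin\{j,n\}}\frac{k^2-z^2}{k^2-j^2}\quad(n\neq j),\qquad
K(n,n)\;=\;\prod_{k\neq n}\frac{k^2-z^2}{k^2-n^2}.
$$
The only auxiliary fact needed is the value of $P_j:=\prod_{k\neq j}\frac{k^2-z^2}{k^2-j^2}=\prod_{k\neq j}\frac{1-z^2/k^2}{1-j^2/k^2}$. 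From $\sin\pi w=\pi w\prod_{k\ge1}(1-w^2/k^2)$ one gets $\prod_{k\neq j}(1-z^2/k^2)=\frac{\sin\pi z}{\pi z\,(1-z^2/j^2)}$, and letting $w\to j$ in the same identity (with the $k=j$ factor split off) gives the classical value $\prod_{k\neq j}(1-j^2/k^2)=\tfrac12(-1)^{j+1}$; since all these products converge absolutely, finitely many factors may be separated off freely, and dividing yields
$$
P_j \;=\; \frac{2(-1)^{j+1}}{1-z^2/j^2}\,\mathrm{sinc}\,\pi z .
$$

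Next I handle the generic case $j^2\neq z^2$. If $n=j$, then $K(n,n)=P_n$ and substituting $1-z^2/n^2=-c_n/(\pi^2n^2)$ reproduces the first claimed formula with $j=n$. If $n\neq j$, split the $k=n$ factor off $P_j$, so that $\prod_{k\notin\{j,n\}}\frac{k^2-z^2}{k^2-j^2}=P_j\cdot\frac{n^2-j^2}{n^2-z^2}$; since $n^2-z^2=-c_n/\pi^2$, the two awkward factors cancel and
$$
K(n,j)\;=\;\frac{n}{j}\cdot\frac{-c_n/\pi^2}{n^2-z^2}\,P_j\;=\;\frac{n}{j}\,P_j .
$$
Inserting the value of $P_j$ and using $1/(1-z^2/j^2)=j^2/(j^2-n^2-c_n/\pi^2)$ gives $K(n,j)=(-1)^{j+1}\frac{2nj}{\,j^2-n^2-c_n/\pi^2\,}\,\mathrm{sinc}\,\pi z$, which is the first identity. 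For the second identity I only need to rewrite $\mathrm{sinc}\,\pi z$. Squaring shows $z=n+\frac{d_n}{\pi n}$: with $w=\sqrt{1+c_n/(\pi^2n^2)}$ one has $c_n/\pi^2=n^2(w-1)(w+1)$, so $\frac{d_n}{\pi n}=\frac{c_n/\pi^2}{n(1+w)}=n(w-1)$ and $n+\frac{d_n}{\pi n}=nw=z$. Hence $\pi z=\pi n+d_n/n$, so $\sin\pi z=(-1)^n\sin(d_n/n)$; and $e_n=d_n\,\mathrm{sinc}(d_n/n)=n\sin(d_n/n)$ gives $\sin\pi z=(-1)^n e_n/n$, whence $\mathrm{sinc}\,\pi z=(-1)^n e_n/(\pi n z)$. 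Substituting this into the first identity produces the second.

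It remains to treat the two degenerate cases directly. If $z^2=j^2$ with $j>0$, then $z=\sqrt{j^2}=j$ (the expression under the root equals $j^2$), and for $n\neq j$ the product over $k\notin\{j,n\}$ is a product of $1$'s while $-c_n/\pi^2=n^2-j^2$, so $K(n,j)=\frac nj$; for $n=j$ one has $c_n=0$ and $K(n,n)=\prod_{k\neq n}1=\frac nn$. If $z=0$, i.e. $c_n/\pi^2=-n^2$, then necessarily $j\neq n$, and with $-c_n/\pi^2=n^2$ one computes $\prod_{k\notin\{j,n\}}\frac{k^2}{k^2-j^2}=(1-j^2/n^2)\big(\prod_{k\neq j}(1-j^2/k^2)\big)^{-1}=\frac{2(n^2-j^2)(-1)^{j+1}}{n^2}$, so $K(n,j)=\frac nj\cdot\frac{n^2}{n^2-j^2}\cdot\frac{2(n^2-j^2)(-1)^{j+1}}{n^2}=(-1)^{j+1}\frac{2n}{j}$.

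The only genuine subtleties are bookkeeping: using the branch of the square root consistently when identifying $nw$ with $z$ and when passing from $\sin\pi z$ to $\sin(d_n/n)$, and justifying the splitting of individual factors out of the absolutely convergent infinite products. I expect the step $z=n+d_n/(\pi n)$ — the hinge between the two stated forms of $K(n,j)$ — to be where the sign and branch tracking must be done most carefully; everything else is routine algebra.
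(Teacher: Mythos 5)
Your argument is correct and follows essentially the same route as the paper: the Euler product for $\sin\pi z$, the evaluation $\prod_{k\neq j}(1-j^2/k^2)=(-1)^{j+1}/2$, and the reparametrization $\sqrt{n^2+c_n/\pi^2}=n+d_n/(\pi n)$ giving $\sin\pi z=(-1)^n e_n/n$; your packaging via $P_j$ and the cancellation $K(n,j)=\frac{n}{j}P_j$ is just a slightly tidier bookkeeping of the same computation. The one slip is the aside that $z=0$ forces $j\neq n$ --- it does not, but the sub-case $j=n$, $z=0$ follows at once from your own identity $K(n,n)=P_n=2(-1)^{n+1}$, so nothing is lost.
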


\begin{remark}
\begin{enumerate}
	\item   $K(n,j)$ is an entire function of $c_n$ as a complex variable.  In the first expression of the proposition, $c_n=\pi^2(j^2-n^2)$ is a removable singularity because the $\mathrm{sinc}$ function vanishes there.
	\item Note that we have $e_n\sim c_n/\pi$ as $n\to\infty$, while when $n^2+c_n/\pi^2=0$, we have $d_n=-n^2\pi$, and thus $e_n=0$.
\end{enumerate}
\end{remark}

\begin{proof}
To prove Lemma~\ref{lemma:K}, write $K(n,j)$ as
\begin{equation}
  K(n,j) \;=\; \frac{n}{j}
  \prod_{k\not=j}\left( 1-\frac{n^2+\gamma_n}{k^2} \right)
  \prod_{k\not=j}\left( 1-\frac{j^2}{k^2} \right)^{\!\!-1}
\end{equation}
with $\gamma_n=c_n/\pi^2$ 
and use the formula
\begin{equation}
  \prod_{k=1}^\infty \left( 1-\frac{z^2}{k^2} \right) \;=\; \frac{\sin \pi z}{\pi z} \;=\; \mathrm{sinc}\,\pi z.
\end{equation}
Excluding $k=j$ from the product yields
\begin{equation}
  \prod_{k\not=j} \left( 1-\frac{z^2}{k^2} \right) \;=\; \left( 1-\frac{z^2}{j^2} \right)^{\!\!-1} \frac{\sin \pi z}{\pi z}
  \;=\; \frac{j^2\sin \pi z}{\pi z(j^2-z^2)},
\end{equation}
and thus
\begin{equation}
   \prod_{k\not=j} \left( 1-\frac{j^2}{k^2} \right) \;=\; 
   \lim_{z\to j} \frac{j^2\sin \pi z}{\pi z(j^2-z^2)}\
   \;=\; j^2\lim_{z\to j} \frac{\cos \pi z}{j^2-3z^2} \;=\; \frac{(-1)^{j+1}}{2},
\end{equation}
and we obtain, for the second infinite product in $K(n,j)$,
\begin{equation}\label{prod2}
  \prod_{k\not=j} \left( 1-\frac{j^2}{k^2} \right)^{\!\!-1} \;=\; 2 (-1)^{j+1}.
\end{equation}
If $j^2=n^2+c_n/\pi^2$, then for the first product, the foregoing calculation gives
\begin{equation}
  \prod_{k\not=j}\left( 1-\frac{n^2+\gamma_n}{k^2} \right) \;=\; \frac{(-1)^{j+1}}{2},
\end{equation}
and putting this together with (\ref{prod2}) yields the result.
If $j^2\not=n^2+c_n/\pi^2$,
\begin{equation}
  \prod_{k\not=j}\left( 1-\frac{n^2+\gamma_n}{k^2} \right)
  \;=\; \left( 1-\frac{n^2+\gamma_n}{j^2} \right)^{\!\!-1\,} \prod_{k=1}^\infty \left( 1-\frac{n^2+\gamma_n}{k^2} \right)
  \;=\; \frac{j^2}{j^2-n^2-\gamma_n} \frac{\sin \pi\sqrt{n^2+\gamma_n}}{\pi\sqrt{n^2+\gamma_n}}.
\end{equation}
Using $\sqrt{n^2+\gamma_n} = n+d_n/(n\pi)$, we obtain
\begin{equation}
  \sin \pi\sqrt{n^2+\gamma_n} \;=\; (-1)^n\sin\frac{d_n}{n}
  \;=\; (-1)^n \frac{d_n}{n} \mathrm{sinc}\frac{d_n}{n} \;=\; (-1)^n \frac{e_n}{n}
\end{equation}
and therefore
\begin{equation}\label{prod1}
  \prod_{k\not=j}\left( 1-\frac{n^2+\gamma_n}{k^2} \right) \;=\;\frac{j^2}{j^2-n^2-\gamma_n}
  \frac{(-1)^n e_n}{\,\pi n\sqrt{n^2+\gamma_n}}.
\end{equation}
Putting this together with (\ref{prod2}) yields the result.
\end{proof}

\begin{lemma}\label{lemma:Kbounds}
  Let $C_1$ and $\{c_n\}_{n\in\NN}$ be such that $|c_n|<C_1$ for all $n\in\NN$.  There is a constant $C$ such that
\begin{equation}
  |K(n,n)| < C
  \qquad
  \hbox{ for all } \,n\in\NN
\end{equation}
and such that
\begin{equation}
  \sum_{n,j\in\NN,\, n\not=j} |K(n,j)|^2 < C.
\end{equation}
\end{lemma}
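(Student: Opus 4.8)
The plan is to reduce both bounds to elementary estimates on the auxiliary quantities $\gamma_n := c_n/\pi^2$, $\zeta_n := n^2+\gamma_n$, $d_n$ and $e_n$ of Lemma~\ref{lemma:K}, separating a finite set of ``small'' indices, where $\zeta_n$ may be tiny or zero, from the ``large'' indices, where $\zeta_n\approx n^2$. I would first record two uniform estimates. Since the principal square root has nonnegative real part, $|1+\sqrt{1+\gamma_n/n^2}|\ge 1$; hence $|d_n|\le C_1/\pi$ and $|d_n/n|\le C_1/\pi$, and, $\mathrm{sinc}$ being bounded by some $M$ on the closed disk of radius $C_1/\pi$, one gets $|e_n|\le C_1M/\pi=:E$ and, from $e_n/c_n = \mathrm{sinc}(d_n/n)\big/\big(\pi(1+\sqrt{1+\gamma_n/n^2})\big)$, also $|e_n/c_n|\le M/\pi$, for every $n$. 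Then fix $N=N(C_1)$ large enough that $n\ge N$ forces both $|\zeta_n|\ge n^2-C_1/\pi^2\ge n^2/2$ and $n\ge 2C_1/\pi^2$.

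For the diagonal estimate, Lemma~\ref{lemma:K} with $j=n$ yields $K(n,n) = 2\pi n\,e_n/(c_n\sqrt{\zeta_n})$, the singularity at $c_n=0$ being removable (with limiting value $1$). For $n\ge N$ this is at most $2\pi n\cdot(M/\pi)\cdot\sqrt2/n = 2\sqrt2\,M$ by the second uniform estimate and $|\sqrt{\zeta_n}|\ge n/\sqrt2$; and for the finitely many $n<N$, $K(n,n)$ is an entire function of $c_n$ (see the remark following Lemma~\ref{lemma:K}), hence bounded on the compact disk $|c_n|\le C_1$. So $\sup_n|K(n,n)|<\infty$, with a bound depending only on $C_1$.

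For the off-diagonal sum I would split into three parts. \emph{(A)} For $n\ge N$ and $j\ne n$: the second formula of Lemma~\ref{lemma:K} gives $|K(n,j)|^2 = 4j^2|e_n|^2\big/\big(\pi^2|j^2-n^2-\gamma_n|^2|\zeta_n|\big)$; since $|j^2-n^2|=|j-n|(j+n)\ge j+n\ge n\ge 2C_1/\pi^2$ we have $|j^2-n^2-\gamma_n|\ge\tfrac12|j^2-n^2|$, so with $|e_n|\le E$ and $|\zeta_n|\ge n^2/2$,
\[
  |K(n,j)|^2 \;\le\; \frac{32E^2}{\pi^2}\cdot\frac{j^2}{(j-n)^2(j+n)^2\,n^2}.
\]
The \emph{key elementary fact} is that $\sum_{j\ne n} j^2/\big((j-n)^2(j+n)^2\big)\le \pi^2/3$ uniformly in $n$ (split into $j<n$ and $j>n$ and use $j^2/(j+n)^2\le 1$ in each case), so summing also over $n\ge N$ against $n^{-2}$ bounds part (A) by $16\pi^2E^2/9$. \emph{(B)} For $n<N$ and $j\ge N$ one has $j>n$ and, as in (A), $|j^2-n^2-\gamma_n|\ge\tfrac12(j-n)(j+n)$; moreover $|\mathrm{sinc}\,\pi\sqrt{\zeta_n}|$ is bounded on the (compact) parameter range for each of the finitely many such $n$, so the first formula of Lemma~\ref{lemma:K} gives $|K(n,j)|^2\le C'(j-n)^{-2}$ for a constant $C'=C'(C_1)$, which is summable over $j\ge N$ and then over $n<N$. \emph{(C)} The remaining pairs have $n,j<N$; these are finitely many, and $K(n,j)$ is entire in $c_n$, hence bounded on $|c_n|\le C_1$. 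Adding (A), (B), (C) bounds $\sum_{n\ne j}|K(n,j)|^2$ by a constant depending only on $C_1$; taking $C$ to be the larger of this constant and $\sup_n|K(n,n)|$ (plus $1$, say) completes the proof.

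The step I expect to be the main obstacle is the convergence of the double series in part (A). An estimate using only the uniform boundedness of $n\,|\mathrm{sinc}\,\pi\sqrt{\zeta_n}|$ together with the crude inequality $|j^2-n^2|\ge j+n$ gives a series that \emph{diverges} in $n$. One has to exploit simultaneously that $\mathrm{sinc}\,\pi\sqrt{\zeta_n}=O(n^{-2})$ --- equivalently, retain the full factor $|\sqrt{\zeta_n}|\sim n$ in the denominator, which is exactly why $e_n$, and not $\sin\pi\sqrt{\zeta_n}$, is the naturally bounded quantity --- and that $|j^2-n^2|=|j-n|(j+n)$ carries the extra factor $|j-n|$; only the product of these two gains makes $\sum_{n\ne j} j^2\big/\big((j-n)^2(j+n)^2 n^2\big)$ converge. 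By contrast the small-index pieces (B) and (C), where $\zeta_n$ could vanish, cost nothing beyond continuity of the entire function $c_n\mapsto K(n,j)$ on the compact disk $|c_n|\le C_1$.
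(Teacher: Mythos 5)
Your proof is correct and follows essentially the same route as the paper's: for large indices you use the $e_n$-representation of $K(n,j)$ from Lemma~\ref{lemma:K} to extract a square-summable majorant of the form $\mathrm{const}\cdot j/\big(|j-n|(j+n)\,n\big)$, and for the finitely many small indices you invoke analyticity of $K(n,j)$ in $c_n$ on the compact disk $|c_n|\le C_1$. The differences are only cosmetic --- you keep the indexing $(n,j)$ instead of reparametrizing by $r=|j-n|$, and you actually derive the uniform bound $|e_n|\le C_1 M/\pi$ (and $|e_n/c_n|\le M/\pi$ for the diagonal) from the principal-branch choice, where the paper simply posits a bound $E$ on $|e_n|$.
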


\medskip
\noindent
\begin{proof}
Write $K(n,j)$ as
\begin{equation}
  K(n,j) \;=\; (-1)^{n+j+1} \frac{2}{\pi} \,\kappa_{nj}\,,
  \qquad
  \kappa_{nj} \;:=\; \frac{j}{j^2-n^2-\gamma_n}\frac{e_n}{\sqrt{n^2+\gamma_n}}.
\end{equation}
When $j=n+r$ ($n\in\NN$, $r\in\NN_0$),
\begin{equation}
  \kappa_{nj} \;=\; \frac{n+r}{r(2n+r)-\gamma_n\,} \frac{e_n}{\sqrt{n^2+\gamma_n}}
    \;:=\; \alpha_{rn}\,,
\end{equation}
and when $n=j+r$ ($j\in\NN$, $r\in\NN_0$),
\begin{equation}
  \kappa_{nj} \;=\; \frac{-j}{r(2j+r)+\gamma_{j+r}\,} \frac{e_{j+r}}{\sqrt{(j+r)^2+\gamma_{j+r}}}
    \;:=\; \beta_{rj}\,,
\end{equation}
and on the diagonal,
\begin{equation}
  \kappa_{nn} \;=\; \alpha_{0n} \;=\; \beta_{0n} \;=\; -\frac{e_n}{\gamma_n} \frac{1}{\sqrt{1+\gamma_n/n^2\,}}\,.
\end{equation}

Now let $|c_n|<C_1$ for all $n\in\NN$, or $|\gamma_n|<\gamma=C_1/\pi^2$.  Let $E$ be such that $|e_n|<E$ for all $n\in\NN$.

First, let's examine $K(n,n)$.  We have
\begin{align}
  \kappa_{nn} &\;=\; - \frac{d_n}{\gamma_n} \,\frac{1}{\sqrt{1+\gamma_n/n^2}}\, \mathrm{sinc}\,\frac{d_n}{n}   \\
    &\;=\; - \frac{\pi}{\,1+\sqrt{1+\gamma_n/n^2\,}} \,\frac{1}{\sqrt{1+\gamma_n/n^2\,}}\, \mathrm{sinc}\,\frac{d_n}{n}.
\end{align}
If $n^2>2C_1/\pi^2$, then $|\gamma_n/n^2|<1/2$ and the $d_n$ lie in a strip $\RR+i(-a,a)$ about the real line, in which the $\mathrm{sinc}$ function is bounded.  So, for some $A>0$, 
\begin{equation}
  |\kappa_{nn}| \;<\; A. 
\end{equation}
From Lemma~\ref{lemma:K}, we have
\begin{equation}
  K(n,n) \;=\; (-1)^n \frac{2n^2}{\gamma_n} \mathrm{sinc}\, \pi\sqrt{n^2+\gamma_n}\,.
\end{equation}
For each $n\in\NN$, $K(n,n)$ is an entire function of $\gamma_n$, for $\mathrm{sinc}\, \pi\sqrt{n^2+\gamma_n}$ vanishes whenever $\gamma_n$ does.  Therefore, there is a constant $C_n$ such that $|K(n,n)|<C_n$ whenever $|\gamma_n|<\gamma$.  Taking $C$ to be the maximum of $2A/\pi$ and the constants $C_n$ over $n^2\leq 2\gamma$, we obtain
\begin{equation}
  |K(n,n)| < C
  \qquad
  \forall\,n\in\NN,
  \quad
  |c_n|<c.
\end{equation}

Next, consider $j=n+r$ with $r\geq1$.  We have
\begin{equation}
  rn\,\alpha_{rn} \;=\; \frac{n+r}{2n+r-\gamma_n/r} \frac{e_n}{\sqrt{1+\gamma_n/n^2}}.
\end{equation}
For all $r\in\NN$ and $n>\gamma+1$,
\begin{equation}
  |rn\,\alpha_{rn}| < \sqrt{2} E,
\end{equation}
and therefore
\begin{equation}
  |K(n,n+r)| < \frac{2\sqrt{2}E}{\pi\,rn}
    \qquad
  (r\in\NN,\,n>\gamma+1).
\end{equation}
Next, write
\begin{equation}
  rK(n,n+r) = (-1)^{n+r+1} \frac{2n(n+r)}{2n+r-\gamma_n/r} \mathrm{sinc}\,\pi\sqrt{n^2+\gamma_n}.
\end{equation}
By analyticity of $\mathrm{sinc}\,\pi\sqrt{n^2+\gamma_n}$ in $\gamma_n$ and the restriction $|\gamma_n|<\gamma$, there exists $C>0$ such that, if $1\leq n\leq 1+\gamma$, then
\begin{equation}
  \left| \mathrm{sinc}\,\pi\sqrt{n^2+\gamma_n} \right| < C.
\end{equation}
Since
\begin{equation}
  \lim_{r\to\infty} \frac{2n(n+r)}{2n+r-\gamma_n/r} \;=\; 2n,
\end{equation}
there exists $r_0\in\NN$ such that, if $r>r_0$ and $n$ is such that $1\leq n\leq\gamma+1$,
\begin{equation}
  \left| \frac{2n(n+r)}{2n+r-\gamma_n/r} \right| \;<\; 3(1+\gamma).
\end{equation}
Thus we obtain
\begin{equation}\label{Kbound1}
  |K(n,n+r)| < \frac{3C_1(1+\gamma)}{r}
      \qquad
  (r>r_0,\,n\leq\gamma+1).
\end{equation}
Because of the analyticity of $K(n,n+r)$ in $\gamma_n$ and the constraint $|\gamma_n|<\gamma$, a bound like (\ref{Kbound1}) (with a different constant) holds also for the finite set of pairs $(n,r)$ with $1\leq r\leq r_0$ and $1\leq n\leq 1+\gamma$, and we obtain
\begin{equation}\label{Kbound2}
  |K(n,n+r)| < \frac{C_2}{r}
      \qquad
  (r\in\NN,\,n\leq\gamma+1).
\end{equation}
For each $r\in\NN$,
\begin{equation}
  \sum_{n\in\NN} |K(n,n+r)|^2 \;<\; \frac{1}{r^2} \left( (\gamma+1)C_2^2 \,+\, C_3^2\!\sum_{n>\gamma+1} \frac{1}{n^2} \right).
\end{equation}
Thus, there is a constant $C$ such that, if $|c_n|<c$ for all $n\in\NN$, then
\begin{equation}\label{L2bound1}
  \sum_{r,n\in\NN} |K(n,n+r)|^2 < C.
\end{equation}

Now consider $n=j+r$ with $r\geq1$.  We have
\begin{equation}
  rj\,\beta_{rj} \;=\; \frac{-j}{2j+r+\gamma_{j+r}/r} \frac{e_{j+r}}{\sqrt{(1+r^2/j^2)^2+\gamma_{j+r}/j^2}}
\end{equation}
Similarly to the previous case,
for all $r\in\NN$ and $j>\gamma+1$, we obtain
\begin{equation}
  |rj\,\beta_{rj}| < \sqrt{2} E,
\end{equation}
and therefore
\begin{equation}
  |K(j+r,j)| < \frac{2\sqrt{2}E}{\pi\,rj}
    \qquad
  (r\in\NN,\,j>\gamma+1).
\end{equation}
Now consider
\begin{equation}
  rK(j+r,j) \;=\; (-1)^j \frac{2(j+r)j}{2j+r+\gamma_{j+r}/r} \mathrm{sinc}\,\pi\sqrt{(j+r)^2+\gamma_{j+r}}\,.
\end{equation}
If $r$ is large enough, then $(j+r)^2+\gamma_{j+r}>0$ for all $j$, so that the $\mathrm{sinc}$ factor is bounded by~$1$ in absolute value.  Then we argue as before and obtain $C$ such that, for all $r\in\NN$ and $j$ such that $1\leq j\leq 1+\gamma$,
\begin{equation}
  \left| \mathrm{sinc}\,\pi\sqrt{(j+r)^2+\gamma_{j+r}} \right| \;<\; C.
\end{equation}
By an identical argument as above, with $n$ replaced by $j$, we obtain 
$r_0\in\NN$ such that, if $r>r_0$ and $n$ is such that $1\leq n\leq\gamma+1$,
\begin{equation}
  \left| \frac{2(j+r)j}{2j+r+\gamma_{j+r}/r} \right| \;<\; 3(1+\gamma),
\end{equation}
and hence
\begin{equation}\label{Kbound3}
  |K(j+r,j)| < \frac{3C_1(1+\gamma)}{r}
      \qquad
  (r>r_0,\,j\leq\gamma+1).
\end{equation}
Similarly to above, this bound is extended to all $r$,
\begin{equation}\label{Kbound4}
  |K(j+r,j)| < \frac{C_2}{r}
      \qquad
  (r\in\NN,\,j\leq\gamma+1).
\end{equation}
In the end, we obtain a constant $C$ such that, if $|c_n|<c$ for all $n\in\NN$, then
\begin{equation}
  \sum_{r,j\in\NN} |K(j+r,j)|^2 < C.
\end{equation}
This, together with (\ref{L2bound1}) yields the statement of the proposition.
\end{proof}

\begin{proof}[Proof of Proposition~\ref{prop:atoalpha}]
The local boundedness of the map
\begin{equation}
  \left(c, (\sigma_n)_{n\in\NN}, (a_n)_{n\in\NN}\right) \;\mapsto\; (\alpha_n)_{n\in\NN}
\end{equation}
from $\CC\times\mathcal{S}_\CC\times(\ell^2_1\tensor\CC)$ to $\ell^2_1\tensor\CC$ is implied by the following: For each $C_1$, there exists $C_2$ such that, for all $c, (\sigma_n), (a_n)$ with $|c|<C_1$, $\|(\sigma_n)\|_{\ell^2}<C_1$ and $\|(a_n)\|_{\ell^2_1}<C_1$, one has $\|(\alpha_n)\|_{\ell^2_1}<C_2$.

Given the conditions $|c|<C_1$ and $\|(\sigma_n)\|_{\ell^2}<C_1$, Lemma~\ref{lemma:Kbounds}, together with Young's generalized inequality, implies that the linear map $(a_n)\mapsto(\alpha_n)$ is bounded from $\ell^2_1\tensor\CC$ to itself with bound $2C$.  The additional condition $\|(a_n)\|_{\ell^2_1}<C_1$ then implies $\|(\alpha_n)\|_{\ell^2_1}<2CC_1$.
\end{proof}

  According to the discussion after the statement of the theorem, Proposition~\ref{prop:atoalpha} completes the proof of Theorem~\ref{thm:analytic}.

\bigskip
\noindent{\bfseries Acknowledgement.}
This material is based upon work supported by the National Science Foundation under Grant No. DMS-1814902 and by UK EPSRC under grant EP/P005985/1.


\begin{thebibliography}{10}

\bibitem{deBranges1968}
L.~de~Branges.
\newblock {\em Hilbert Spaces of Entire Functions}.
\newblock Prentice Hall, 1968.

\bibitem{FreilingYurko2001}
G.~Freiling and V.~Yurko.
\newblock {\em Inverse {S}turm-{L}iouville Problems and Their Applications}.
\newblock Nova Science Publishers, 2001.

\bibitem{Levin}
B.~Ya. Levin.
\newblock {\em Lectures on entire functions}, volume 150 of {\em Translations
  of Mathematical Monographs}.
\newblock American Mathematical Society, Providence, RI, 1996.
\newblock In collaboration with and with a preface by Yu. Lyubarskii, M. Sodin
  and V. Tkachenko, Translated from the Russian manuscript by Tkachenko.

\bibitem{LevitanSargsjan1991a}
B.~M. Levitan and I.~S. Sargsjan.
\newblock {\em Sturm-Liouville and Dirac Operators}.
\newblock Mathematics and its Applications (Soviet Series). Springer,
  Dordrecht, 1991.

\bibitem{Lindholm}
N.~Lindholm.
\newblock Sampling in weighted {$L^p$} spaces of entire functions in
  {${\CC}^n$} and estimates of the {B}ergman kernel.
\newblock {\em J. Funct. Anal.}, 182(2):390--426, 2001.

\bibitem{MarcenkoOstrovskii1975}
V.~A. Mar{\v c}enko and I.~V. Ostrovskii.
\newblock A characterization of the spectrum of {H}ill's operator.
\newblock {\em Math. USSR Sbornik}, 26(4):493--554, 1975.

\bibitem{Marco}
N.~Marco, X.~Massaneda, and J.~Ortega-Cerd\`a.
\newblock Interpolating and sampling sequences for entire functions.
\newblock {\em Geom. Funct. Anal.}, 13(4):862--914, 2003.

\bibitem{McKeanTrubowitz1976a}
H.~P. McKean and E.~Trubowitz.
\newblock Hill's operator and hyperelliptic function theory in the presence of
  infinitely many branch points.
\newblock {\em Comm. Pure Appl. Math.}, XXIX:143--226, 1976.

\bibitem{PoschelTrubowitz1987}
J.~P\"oschel and E.~Trubowitz.
\newblock {\em Inverse Spectral Theory}, volume 130 of {\em Pure and Applied
  Mathematics}.
\newblock Academic Press, 1987.

\bibitem{Shannon}
C.~E. Shannon.
\newblock Communication in the presence of noise.
\newblock {\em Proc. I.R.E.}, 37:10--21, 1949.

\bibitem{Shipman2019}
S.~P. Shipman.
\newblock Reducible {F}ermi surfaces for non-symmetric bilayer quantum-graph
  operators.
\newblock {\em J. Spectral Theory}, DOI: 10.4171/JST/285, 2019.

\bibitem{Whittaker}
E.~Whittaker.
\newblock On the functions which are represented by the expansions of the
  interpolation-theory.
\newblock {\em Proc. RSE}, 35:181--194, 1915.

\bibitem{Whittlesey1965}
E.~F. Whittlesey.
\newblock Analytic functions in {B}anach spaces.
\newblock {\em Proc. Amer. Math. Soc.}, 16:1077--1083, 1965.

\bibitem{Yurko1975}
V.~A. Yurko.
\newblock The inverse problem for second-order differential operators with
  regular boundary conditions.
\newblock {\em Matem. Zametki (English transl. in Mathematical Notes 18,
  no.3--4 (1975 928--932))}, 18(4):569--576, 1975.

\bibitem{Yurko2016}
V.~A. Yurko.
\newblock Recovering differential operators with nonseparated boundary
  conditions in the central symmetric case.
\newblock {\em arXiv:1602.04239}, 2016.

\end{thebibliography}
\end{document}